\theoremstyle{plain}
\newtheorem{theorem}{Theorem}[section]
\newtheorem{lemma}[theorem]{Lemma}
\newtheorem{corollary}[theorem]{Corollary}
\newtheorem{proposition}[theorem]{Proposition}
\newtheorem{prop-defn}[theorem]{Proposition-Definition}
\newtheorem*{theorem:main}{Main Theorem} 
\theoremstyle{definition}
\newtheorem{question}[theorem]{Question}
\newtheorem*{remark*}{Remark}
\newtheorem*{remarks*}{Remarks}
\newcommand{\M}{\mathcal M}
\newcommand{\Mod}{\rm Mod}
\newcommand{\bY}{{\bf Y}}
\newcommand{\diam}{\rm diam}
\def\C{{\mathcal C}}
\title{Undistorted purely pseudo-Anosov groups}
\author{M. Bestvina, K. Bromberg, A. E. Kent, and C. J. Leininger}
\thanks{The authors gratefully acknowledge support from NSF grants DMS-1308178, DMS-1509171, DMS-1350075, and DMS-1510034.  The third author extends her thanks to the Institute for Advanced Study for its support under NSF grant DMS-1128155 while this work was completed.}
\begin{document}

\maketitle

\begin{abstract}
In this paper we prove that groups as in the title are convex cocompact in the mapping class group.
\end{abstract}

\section{Introduction}

Let $S$ be a finite type surface with negative Euler characteristic, and $\Mod(S)$ its mapping class group.  In \cite{FM:CC}, Farb and Mosher define a notion of {\em convex cocompactness} for a subgroup $G< \Mod(S)$ by requiring that the orbit in Teichm\"uller space be quasi-convex.  More importantly, a subgroup $G < \Mod(S)$ is convex cocompact if and only if in the surface group extension
\[ 1 \to \pi_1(S) \to \Gamma_G \to G \to 1, \]
the group $\Gamma_G$ is hyperbolic when $S$ is closed (\cite{FM:CC,Hamenstadt}), and relatively hyperbolic when $S$ has punctures (\cite{MjSardar}).

The definitions readily imply that if $G$ is convex cocompact, then it is finitely generated and purely pseudo-Anosov, meaning that every infinite order element is pseudo-Anosov.  The next question asks if the converse holds (see \cite{FM:CC}).
\begin{question} \label{Q:motivating question}
If $G<\Mod(S)$ is finitely generated and purely pseudo-Anosov, is it convex cocompact?
\end{question}
This is closely related to Gromov's hyperbolicity question (see \cite[Question 1.1]{bestvina:problems}).  Indeed, if $S$ is closed, and if $G <\Mod(S)$ has a finite $K(G,1)$ and is purely pseudo-Anosov, then $\Gamma_G$ has a finite $K(\Gamma_G,1)$ and no Baumslag-Solitar subgroups (see, e.g.~\cite{KL:survey}), and Gromov's question asks if $\Gamma_G$ is hyperbolic.

Question~\ref{Q:motivating question} seems to be quite difficult, though several classes of finitely generated, purely pseudo-Anosov subgroups have been shown to be convex cocompact; see \cite{KLS:trees,DKL:3-mfd,MT:RAAG,KMT:RAAG}.  In this paper we prove that the additional assumption of being undistorted (that is, quasi-isometrically embedded) in $\Mod(S)$ suffices for convex cocompactness.

\begin{theorem:main} A subgroup $G< \Mod(S)$ is convex cocompact if and only if it is finitely generated, undistorted, and purely pseudo-Anosov.
\end{theorem:main}

In \cite{DT:stable}, Durham and Taylor define a strong form of quasi-convexity they call {\em stability}, and prove that when $\xi(S) \geq 2$ (see \S\ref{S:prelim}), stability in the mapping class group is equivalent to being convex cocompact.\footnote{The assumption $\xi(S) \geq 2$ is necessary, but is missing in \cite{DT:stable}.}
The definition of stability includes the assumption of being undistorted, and it follows easily from the Nielsen-Thurston classification (see \cite{farb:MCG}) and Masur-Minsky distance formula (Theorem~\ref{T:distance formula} below), that stable subgroups must be purely pseudo-Anosov when $\xi(S) \geq 2$. We therefore recover Durham and Taylor's characterization of convex cocompactness as a corollary of the Main Theorem.
\begin{corollary} For a surface $S$ with $\xi(S) \geq 2$, a subgroup $G < \Mod(S)$ is convex cocompact if and only if it is stable in $\Mod(S)$.
\end{corollary}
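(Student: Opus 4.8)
The plan is to deduce the corollary from the Main Theorem by establishing the two implications separately, in each case reducing to hypotheses we can verify directly. For one direction the Main Theorem does all of the work once I identify the relevant properties of a stable subgroup; for the other direction I would invoke the curve-complex characterization of convex cocompactness and transfer a Morse property from $\C(S)$ back into $\Mod(S)$.

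First I would prove that a stable subgroup $G<\Mod(S)$ is convex cocompact by checking the three hypotheses of the Main Theorem. Being undistorted and being finitely generated are part of the definition of a stable subgroup, so only pure pseudo-Anosovness remains. I would argue by contradiction: suppose $g\in G$ has infinite order but is not pseudo-Anosov. By the Nielsen--Thurston classification $g$ is reducible, so a power $g^k$ fixes an essential curve $\alpha$. Since $\xi(S)\geq 2$, the centralizer of $g^k$ contains a Dehn twist $T_\alpha$ together with an independent reducible (or twisting) element supported in the complementary subsurface, producing an undistorted free abelian subgroup of rank at least two. The resulting quasi-flat through $\langle g\rangle$ is incompatible with the Morse property that stability forces on every element of $G$, a contradiction. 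Granting that $G$ is purely pseudo-Anosov, the Main Theorem gives convex cocompactness at once.

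For the converse I would begin from the characterization (Kent--Leininger, Hamenst\"adt) that $G<\Mod(S)$ is convex cocompact if and only if an orbit map $G\to\C(S)$ is a quasi-isometric embedding. Because $\C(S)$ is Gromov hyperbolic by Masur--Minsky, the image of this embedding is quasiconvex, hence Morse, inside $\C(S)$. The remaining task is to promote this to stability of $G$ inside $\Mod(S)$, i.e.\ to show that every quasigeodesic of $\Mod(S)$ with endpoints in $G$ fellow-travels the orbit. I would carry this out with the distance formula (Theorem~\ref{T:distance formula}): for $g,h\in G$ the term indexed by $S$ itself dominates $d_{\Mod(S)}(g,h)$, while the bounded geodesic image theorem controls the projections to proper subsurfaces, so any resolution of an $\Mod(S)$-quasigeodesic must shadow a $\C(S)$-geodesic that is trapped in a bounded neighborhood of the quasiconvex orbit.

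I expect the main obstacle to be precisely this last transfer step: upgrading a quasiconvex (Morse) image in the hyperbolic space $\C(S)$ to genuine stability in the non-hyperbolic group $\Mod(S)$, where one must simultaneously control \emph{all} subsurface projections rather than only the top-level one. The hypothesis $\xi(S)\geq 2$ is essential at two points and I would flag it explicitly: it guarantees the existence of the proper essential subsurfaces and the independent twisting needed to build the obstructing quasi-flat in the pure-pseudo-Anosov argument, and (as the footnote signals) the equivalence genuinely degenerates when $\xi(S)\leq 1$, where $\Mod(S)$ is already hyperbolic and the interplay between reducibility, the Farey graph, and stability no longer behaves as above.
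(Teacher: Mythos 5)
Your argument for the implication (stable $\Rightarrow$ convex cocompact) is essentially the paper's: stability gives finite generation and undistortion by definition, and pure pseudo-Anosovness follows from the Nielsen--Thurston classification together with the distance formula. One small correction there: the obstructing quasi-flat must pass through orbit points of $G$, so it is not enough that the centralizer of $g^k$ contains two independent twisting elements; you need a rank-two undistorted free abelian subgroup \emph{containing} $g^k$ itself (for instance $\langle g^k, T_\alpha\rangle$ when $g^k$ is not a twist power, or $\langle T_\alpha^m, T_\beta\rangle$ with $\beta$ disjoint from $\alpha$ when $g^k=T_\alpha^m$, which is where $\xi(S)\geq 2$ enters), so that the flat violates the Morse property at the pair of orbit points $id$ and $g^{nk}$. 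With that adjustment this direction is exactly the paper's appeal to Nielsen--Thurston and Theorem~\ref{T:distance formula}, followed by the Main Theorem.

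The converse (convex cocompact $\Rightarrow$ stable) is where there is a genuine gap, which you flag yourself. Note first that the paper does not reprove this direction: it is the implication of Durham--Taylor \cite{DT:stable} that is simply cited, the content of the corollary being that the other implication is now recovered from the Main Theorem. Your sketch --- quasiconvexity of the orbit in the hyperbolic space $\C(S)$, then ``shadowing'' --- does not close the argument, because proximity of $\C(S)$-shadows does not control distance in $\M(S)$: two markings can have nearby curve-graph projections while differing enormously in some proper subsurface. The missing ingredient is a uniform bound on $d_Y(g\mu,h\mu)$ over all $g,h\in G$ and all proper $Y\subsetneq S$, which one extracts from Proposition~\ref{P:ccc is qi in C} together with Proposition~\ref{P:BGI} and quasiconvexity of the orbit; only with that bound does Theorem~\ref{T:distance formula} collapse to the single term $d_S(\cdot,\cdot)$ and allow the Morse property to transfer from $\C(S)$ to $\Mod(S)$. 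As written this direction is a plan rather than a proof; either supply the projection bound or, as the paper implicitly does, cite \cite{DT:stable} for it.
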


Koberda-Mangahas-Taylor proved that if $G$ is a subgroup of an {\em admissible} right-angled Artin subgroups of $\Mod(S)$, then $G$ is convex cocompact if and only if $G$ is finitely generated and purely pseudo-Anosov; see \cite{KMT:RAAG} for definitions and the precise statement.  This appealed to the Durham-Taylor stability formulation of convex cocompactness, together with previous work of Mangahas-Taylor \cite{MT:RAAG}.  Admissible right-angled Artin subgroups are in particular undistorted, and so we obtain a generalization of the mapping class group result of \cite{KMT:RAAG}.
\begin{corollary} \label{C:KMT} Suppose $H < \Mod(S)$ is an undistorted, finitely generated right-angled Artin subgroup of the mapping class group, and $G < H$ is any subgroup.  Then $G$ is convex cocompact if and only if $G$ is finitely generated and purely pseudo-Anosov.
\end{corollary}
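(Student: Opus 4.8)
The forward implication requires nothing new: as observed in the introduction, convex cocompactness of $G$ already forces $G$ to be finitely generated and purely pseudo-Anosov. For the reverse implication, suppose $G < H$ is finitely generated and purely pseudo-Anosov. By the Main Theorem it suffices to prove that $G$ is undistorted in $\Mod(S)$, and since $H$ is assumed undistorted in $\Mod(S)$ and quasi-isometric embeddings compose, this reduces to showing that $G$ is undistorted in $H$. In this way the whole problem becomes an \emph{internal} question about the right-angled Artin group $H$, with no further reference to the surface: a finitely generated, purely pseudo-Anosov subgroup of $H$ is undistorted in $H$.

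To attack this I would pass through the notion of a \emph{purely loxodromic} subgroup of $H$, meaning a subgroup every nontrivial element of which acts loxodromically on the Kim--Koberda extension graph of $H$. The main input is the theorem of Koberda--Mangahas--Taylor \cite{KMT:RAAG} that a finitely generated purely loxodromic subgroup of a right-angled Artin group is stable, and in particular undistorted, in that group. Granting this, it remains only to verify the hypothesis, namely that $G$ is in fact purely loxodromic in $H$; the stability theorem then yields undistortion of $G$ in $H$, and the reduction above completes the proof.

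The crux, and the only step genuinely using the embedding $H < \Mod(S)$, is therefore the implication \textbf{purely pseudo-Anosov $\Rightarrow$ purely loxodromic}, which I would establish elementwise via its contrapositive. If $1 \neq g \in H$ does \emph{not} act loxodromically on the extension graph, then by the extension-graph theory underlying \cite{KMT:RAAG} it is conjugate into a proper special subgroup $A(\Lambda)$ that splits as a direct product $A(\Lambda_1)\times A(\Lambda_2)$ with both factors infinite. Writing the conjugate of $g$ as $(g_1,g_2)$ and using that $H$ is torsion-free, one checks that $g$ commutes with an infinite-order element independent of it (for instance $(g_1,1)$ when $g_2\neq 1$, or any nontrivial element of the factor not containing $g$), so that the centralizer of $g$ contains a copy of $\mathbb Z^2$; but a pseudo-Anosov element of $\Mod(S)$ has virtually cyclic centralizer by the Nielsen--Thurston theory \cite{farb:MCG}, so $g$ cannot be pseudo-Anosov. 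Since $G$ is purely pseudo-Anosov, every nontrivial element of $G$ is pseudo-Anosov, hence loxodromic by the above, so $G$ is purely loxodromic as required. I expect this dichotomy to be the main obstacle: it is exactly the identification of the non-loxodromic elements with those conjugate into proper join (direct-product) subgroups that replaces the pseudo-Anosov/loxodromic comparison handled via \cite{MT:RAAG} in the admissible setting, and because no admissibility is assumed here the centralizer argument is the natural substitute. Degenerate situations (for example when $H$ is itself a direct product and so has no pseudo-Anosov elements, forcing $G$ to be trivial and hence trivially convex cocompact) are absorbed harmlessly into the same argument.
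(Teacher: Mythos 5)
Your proposal is correct and follows essentially the same route as the paper: forward direction from the Main Theorem, and for the converse, purely pseudo-Anosov $\Rightarrow$ purely loxodromic in $H$, then Theorem 1.1 of \cite{KMT:RAAG} to get undistortion of $G$ in $H$, composed with undistortion of $H$ in $\Mod(S)$, and finally the Main Theorem. The only difference is that you spell out the centralizer argument for the purely loxodromic step (non-loxodromic elements lie in join subgroups and so have $\mathbb{Z}^2$ in their centralizer, impossible for a pseudo-Anosov), whereas the paper simply cites \cite{KMT:RAAG} for this equivalence.
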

\begin{proof} The forward implication is immediate from the forward implication of the Main Theorem, so we assume that $G$ is finitely generated and purely pseudo-Anosov, and prove that it must be convex cocompact.  First, the fact that $G$ is purely pseudo-Anosov means that as a subgroup of the right-angled Artin group $H$, $G$ is {\em purely loxodromic}---the centralizer of every nontrivial element is cyclic; see \cite{KMT:RAAG}.  From this and Theorem 1.1 of \cite{KMT:RAAG}, it follows that $G$ is undistorted in $H$.  Since $H$ is undistorted in $\Mod(S)$ by assumption, it follows that $G$ is undistorted in $\Mod(S)$.  By the Main Theorem, $G$ is convex cocompact.
\end{proof}

\noindent {\bf Outline.} 
The forward implication of the Main Theorem is straight forward from the definition, using the thick part of Teichm\"uller space as a model for $\Mod(S)$.  Here we outline a proof of the reverse implication.  We use the following characterization of convex cocompactness in terms of $\C(S)$, the {\em curve graph of $S$}, proved in \cite{KL:CC} and \cite{Hamenstadt}.  
\begin{proposition}  \label{P:ccc is qi in C}
A subgroup $G < \Mod(S)$ is convex cocompact if and only if it is finitely generated and, for any $\alpha \in \C(S)$, the orbit map $g \mapsto g \cdot \alpha$ is a quasi-isometric embedding.
\end{proposition}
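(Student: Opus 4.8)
The plan is to relate the coarse geometry of the curve graph $\C(S)$ to that of Teichm\"uller space $\mathcal{T}(S)$, using the Masur--Minsky theorem that $\C(S)$ is Gromov hyperbolic, Klarreich's identification of $\partial\C(S)$ with the space of ending laminations $\mathcal{EL}(S)$, and the coarsely Lipschitz \emph{systole map} $\Upsilon\colon \mathcal{T}(S)\to\C(S)$ recording a shortest curve. Recall that, by Farb--Mosher \cite{FM:CC}, $G$ is convex cocompact exactly when it acts cocompactly on the weak hull $\mathfrak{H}$ of its limit set $\Lambda G$, namely the union of Teichm\"uller geodesics joining points of $\Lambda G\subset \mathrm{PML}(S)$; in particular the orbit map into $\mathcal{T}(S)$ is then a quasi-isometric embedding. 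For the forward implication I would first use that cocompactness of the action on $\mathfrak{H}$ forces the systole to be bounded below there, so that $\mathfrak{H}$ lies in the thick part. On the thick part all proper subsurface projections are uniformly bounded, so a thick-part comparison (Rafi) gives $d_{\mathcal{T}}(X,Y)\asymp d_{\C}(\Upsilon X,\Upsilon Y)$ for $X,Y$ (and the geodesic between them) in the hull. Combining this with the quasi-isometric embedding $g\mapsto gX$ into $\mathcal{T}(S)$ yields $d_{\C}(\alpha,g\alpha)\asymp \|g\|$, i.e.\ the orbit map to $\C(S)$ is a quasi-isometric embedding.

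For the reverse implication, assume $G$ is finitely generated and $\phi\colon g\mapsto g\alpha$ is a quasi-isometric embedding into $\C(S)$. First, every infinite-order $g\in G$ has positive stable translation length on $\C(S)$, so by the Masur--Minsky classification it is pseudo-Anosov and $G$ is purely pseudo-Anosov. Next, since $G$ acts by isometries with a quasi-isometrically embedded orbit, the image of any word geodesic is a quasi-geodesic in the hyperbolic space $\C(S)$, and the Morse lemma shows the orbit $G\alpha$ is \emph{quasiconvex}; consequently $G$ is word hyperbolic and $\phi$ extends to a topological embedding $\partial G\hookrightarrow \partial\C(S)=\mathcal{EL}(S)$ whose image is the limit set. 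The goal is then to promote this picture to $\mathcal{T}(S)$: realize the limit laminations in $\mathrm{PML}(S)$, form the weak hull of the Teichm\"uller geodesics between them, and show $G$ acts cocompactly on it with the orbit quasi-isometrically embedded into $\mathcal{T}(S)$.

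I expect the main obstacle to be bridging the gap between the hyperbolic but ``lossy'' geometry of $\C(S)$ and the finer, non-hyperbolic geometry of $\mathcal{T}(S)$, whose thin parts let an orbit drift arbitrarily far without moving in $\C(S)$. Two points require the most care. First, one must show each limit point of $G$ is \emph{uniquely ergodic} as a point of $\mathrm{PML}(S)$, not merely minimal and filling; this is what makes the weak hull well defined and guarantees that $\Upsilon$ restricted to it is a genuine quasi-isometric embedding rather than merely coarsely Lipschitz. Second, one must bound the subsurface projections $d_W(\alpha,g\alpha)$ for all proper $W\subsetneq S$: the hypothesis that $d_{\C}(\alpha,g\alpha)$ grows linearly, together with the bounded geodesic image theorem, forces these projections to stay uniformly bounded along the orbit, and feeding this into the distance formula (Theorem~\ref{T:distance formula}) controls the excursions of the corresponding Teichm\"uller geodesics into the thin part. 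This yields the lower bound $d_{\mathcal{T}}(X,gX)\succeq\|g\|$ and the cocompactness of the hull needed for convex cocompactness. Establishing unique ergodicity of the limit points purely from the quasi-isometric embedding hypothesis is the crux of the argument.
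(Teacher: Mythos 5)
The first thing to say is that the paper contains no proof of this proposition to compare against: it is quoted as a known characterization, proved in \cite{KL:CC} and \cite{Hamenstadt}, and used as a black box. What you have written is a reasonable roadmap of the Kent--Leininger argument (systole map, Klarreich's identification of $\partial\C(S)$ with the ending laminations, weak hull of the limit set), and you have correctly located where the real work lies. But as written it is an outline, not a proof: both of the steps you yourself flag as delicate are asserted rather than carried out, and one of them is supported by an invalid deduction.

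Concretely. First, unique ergodicity of the limit points: you call this ``the crux'' and leave it open, and that is exactly right --- Klarreich's theorem only tells you that points of $\partial\C(S)$ are minimal filling laminations, and promoting a quasi-geodesic in $\C(S)$ to a Teichm\"uller geodesic lying in the thick part (equivalently, by Masur's criterion, to a uniquely ergodic vertical foliation) is the main content of the reverse implication in \cite{KL:CC}; the Morse lemma in $\C(S)$ does not supply it. Second, your claim that linear growth of $d_S(\alpha,g\alpha)$ together with Proposition~\ref{P:BGI} forces the proper subsurface projections $d_W(\alpha,g\alpha)$ to be uniformly bounded is not a valid inference as stated: Bounded Geodesic Image only says that when $d_W(\alpha,g\alpha)\geq M$ a geodesic in $\C(S)$ must meet the set of curves missing $W$; it gives no upper bound on $d_W$, and a quasi-isometrically embedded orbit is a priori consistent with projections growing on the order of $d_S(\alpha,g\alpha)$. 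The uniform bound is true for convex cocompact groups, but in the published proofs it is extracted from the thick-part behavior of the Teichm\"uller geodesics in the hull (or from contraction properties of quasi-geodesics in Teichm\"uller space), not from BGI alone. So the proposal identifies the correct architecture of the known proof but does not close either of its two essential gaps.
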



A finitely generated subgroup $G < \Mod(S)$ is quasi-isometrically embedded if the orbit map $g \mapsto g \mu$ to the {\em marking graph of $S$}, $G \to \M(S)$, is a quasi-isometric embedding.  Further assuming that $G$ is purely pseudo-Anosov, but not convex cocompact, we ultimately produce an infinite order reducible element of $G$, which is a contradiction.

We begin by proving Proposition~\ref{P:linear projection reducible}, which says that if one can find a sufficiently large group element $g \in G$ and a proper subsurface $Y$ so that the projection distance between $\mu$ and $g \mu$ in the marking graph of $Y$ are linear in word length $|g|$, then $G$ contains a reducible element.  To apply this, we proceed as follows.  

By Proposition~\ref{P:ccc is qi in C}, the assumption that $G$ is not convex cocompact means that there are arbitrarily large group elements $g \in G$ so that the distance between $\mu$ and $g \mu$ in $\C(S)$ grows sub-linearly in $|g|$.  By Theorem~\ref{T:distance formula} (Distance Formula), this means that the sum of other big subsurface projections between $\mu$ and $g  \mu$ must be growing linearly.  Furthermore, by Proposition~\ref{P:overlapping factors} (Overlapping Factors), we may assume that the subsurfaces in this sum {\em overlap} (no two are either disjoint or nested).  

Proposition~\ref{P:subsurface order} (Subsurface Order) provides a natural total order on the subsurfaces appearing in the sum, and appealing to Proposition~\ref{P:Behrstock} (Behrstock Inequality) we show that the path in $\M(S)$ from $\mu$ to $g \cdot \mu$ (coming from a geodesic in the Cayley graph of $G$) is basically forced to traverse the required distance in each of the curve complexes of these subsurfaces one at a time and in order (see Lemma~\ref{L:path traversed in order}).  

These subsurfaces can be divided into maximal intervals of subsurfaces which fill larger, proper subsurfaces of $S$.   Appealing to Proposition~\ref{P:BGI} (Bounded Geodesic Image), we show that the number of these larger subsurfaces is at most the distance in the curve graph of $S$.  Because the curve complexes are traversed one at a time and in order, the marking graphs are also effectively traversed one at a time.  From this, we can efficiently express $g$ as a product of group elements so that each element corresponds to the traversal of one of the marking graphs of these larger subsurfaces.  This is essentially the content of Proposition~\ref{P:linear marking subsurface}.  

Finally, sublinearity of the distance between $\mu$ and $g \mu$ in $\C(S)$ guarantees that one of the elements in the product has length tending to infinity, and projection to the marking graph of the associated subsurface linear in length.  Applying our criterion (Proposition~\ref{P:linear projection reducible}), we obtain a nontrivial reducible element, and hence our desired contradiction.

We note that the general strategy of our proof shares some features with the proof of Theorem 1.1 of \cite{KMT:RAAG} regarding an analogous class of subgroups of right-angled Artin groups, though the techniques are quite different.\\

{\bf Acknowledgements.}  The authors would like to thank Johanna Mangahas for pointing out Corollary~\ref{C:KMT}.

\section{Preliminaries} \label{S:prelim}

By a {\em subsurface of $S$}, we mean a connected, $\pi_1$--injective, properly embedded subsurface $Y \subseteq S$ such that every puncture of $Y$ is a puncture of $S$, and every boundary component is a homotopically essential, nonperipheral closed curve in $S$ (in fact, this latter implies $Y$ is $\pi_1$--injective), and such that $Y$ is not homeomorphic to a $3$--holed sphere.  A {\em curve in $Y$} is a homotopically essential, non-peripheral simple closed curve in $S$.  Subsurfaces and curves will be considered up to isotopy, and we will freely pass between isotopy classes and representatives of the isotopy classes whenever convenient.  Given a subsurface $Y$, let $\xi(Y) = 3g-3+n$, where $g$ is the genus of $Y$ and $n$ is the number of punctures plus the number of boundary components of $Y$.

If $Y$ is not an annulus, the {\em curve graph of $Y$} is the simplicial graph, $\C(Y)$, whose vertices are curves in $Y$ and whose edges are pairs of distinct curves that can be realized with minimal intersection in $Y$ (that is, pairwise disjoint if $Y$ is not a four-punctured sphere or once-punctured torus and intersecting twice or once, in these two cases, respectively).  If $Y$ is an annulus, $\C(Y)$ is defined as follows.  Let $\widetilde Y$ be the natural compactification of the cover of $S$ in which $Y$ lifts so that the inclusion is a homotopy equivalence. The vertices of $\C(Y)$ are (isotopy classes of) arcs connecting the distinct boundary components of $\widetilde Y$ and edges are pairs of arcs that can be realized with disjoint interiors.  For any two vertices $\alpha,\alpha' \in \C(Y)$, the distance between $\alpha$ and $\alpha'$ in $\C(Y)$ is defined to be the minimal length (number of edges) of any edge-path between $\alpha$ and $\alpha'$ in $\C(Y)$.  A geodesic is any minimal length edge-path.  According to \cite{MM:CC1}, $\C(Y)$ is a Gromov hyperbolic, geodesic metric space.

We say that two proper subsurfaces $Y,Z \subset S$ {\em overlap} if they cannot be realized disjointly and neither can be realized as a subsurface of the other.  In this case, we write $Y \pitchfork Z$.  A curve $\alpha$ {\em cuts} a subsurface $Y$ if $\alpha$ cannot be realized disjoint from $Y$, and in this case we similarly write $\alpha \pitchfork Y$.  If $\alpha$ is a curve and $Y$ is a subsurface with $\alpha \pitchfork Y$, then the {\em projection of $\alpha$ to $Y$}, $\pi_Y(\alpha) \subset \C(Y)$ is defined as follows; see \cite{MM:CC2}.  If $Y$ is an annulus, then $\pi_Y(\alpha)$ is the union of the arcs of $\widetilde Y$ which are (closures of) arcs of the preimage of $\alpha$ in $\widetilde Y$ with endpoints on distinct boundary components.  If $Y$ is not an annulus, then realize $\alpha$ and $Y$ so that they intersect minimally, and let $\alpha'$ be any arc (or simple closed curve) of $\alpha \cap Y$.  There is at least one component of the regular neighborhood of $\alpha' \cup \partial Y$ which is essential in $Y$, and we let $\pi_Y(\alpha)$ denote the union of all curves in $Y$ so obtained  (over all choices of arc $\alpha'$).  If $\alpha \not \pitchfork Y$, we define $\pi_Y(\alpha) = \emptyset$.

If $Y$ is not an annulus, a marking $\mu$ on $Y$ is maximal set of
pairwise disjoint curves $b$ in $Y$ (i.e.~a {\em pants decomposition})
called the {\em base of $\mu$}, together with a diameter $1$ subset
$t_\alpha \subset \C(Y_\alpha)$ for each $\alpha \in b$, where
$Y_\alpha$ is the annular neighborhood of $\alpha$.  The subset $t_\alpha$ is called a {\em transversal for $\alpha$}. If $Y$ is an annulus, then a marking is just an vertex of $\C(Y)$; see \cite{MM:CC2,BKMM}.  Markings are considered up to isotopy, and the set of markings on $Y$ are the vertices of a connected graph $\mathcal M(Y)$ called the {\em marking graph of $Y$}.  Edges correspond to markings that differ by {\em elementary moves}.  We will not need the specifics of this definition, instead we note that $\Mod(S)$ acts on $\mathcal M(S)$ with the following key properties; \cite{MM:CC2}.
\begin{proposition} [Mapping Class Group Marking Graph] \label{P:MCGMG}
For any finite generating set of $\Mod(S)$ and element $\mu \in \mathcal M(S)$, the orbit map $\Mod(S) \to \mathcal M(S)$, defined by $g \mapsto g \cdot \mu$, is a quasi-isometric embedding.
\end{proposition}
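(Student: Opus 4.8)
The plan is to deduce this from the \v{S}varc--Milnor lemma (the fundamental observation of geometric group theory), applied to the action of $\Mod(S)$ on the marking graph. First I would record the structural features of $\M(S)$ that make this possible. The graph $\M(S)$ is \emph{connected}, since any two complete markings are joined by a finite sequence of elementary moves; it is \emph{locally finite}, since from a fixed complete marking there are only finitely many elementary moves available (one or two twist moves at each base curve, together with finitely many flip moves); and consequently, equipped with its path metric, $\M(S)$ is a \emph{proper geodesic metric space}. Moreover $\Mod(S)$ acts on $\M(S)$ by simplicial automorphisms, hence by isometries, because mapping classes permute complete markings and carry elementary moves to elementary moves.

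Next I would verify the two hypotheses of the \v{S}varc--Milnor lemma. For \emph{cocompactness} it suffices that $\Mod(S)$ act on the vertex set of $\M(S)$ with finitely many orbits. By the change-of-coordinates principle all pants decompositions lie in a single orbit, so it is enough to bound the orbits of markings sharing a fixed base pants decomposition $P$; such markings differ only in their transversal (twisting) data, and the Dehn twists about the curves of $P$, which lie in the $\Mod(S)$-stabilizer of $P$, act on this data with finitely many orbits. Since $\M(S)$ is locally finite, finitely many vertex orbits gives a cocompact action. For \emph{proper discontinuity} I would show that the stabilizer in $\Mod(S)$ of a complete marking $\mu$ is finite: the base pants decomposition together with the transversal curves of $\mu$ forms a filling collection of curves on $S$, and a mapping class fixing a filling collection lies in a finite group by the Alexander method.

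With these properties in hand, the \v{S}varc--Milnor lemma yields that $\Mod(S)$ is finitely generated and that, for any basepoint $\mu \in \M(S)$, the orbit map $g \mapsto g \cdot \mu$ is a quasi-isometry, in particular a quasi-isometric embedding, with respect to the word metric coming from any finite generating set; since word metrics from different finite generating sets are bi-Lipschitz equivalent, the conclusion holds for every finite generating set. The substantive content lies entirely in the verification of the \v{S}varc--Milnor hypotheses, and I expect the finiteness of the stabilizer of a complete marking --- that is, properness of the action --- to be the main obstacle, as it rests on the rigidity of a filling marking rather than on a routine counting argument.
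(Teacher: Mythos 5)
The paper offers no proof of this proposition: it is quoted as a background fact with a citation to Masur--Minsky \cite{MM:CC2}, so there is no in-paper argument to compare against. Your \v{S}varc--Milnor argument is correct and is essentially the proof given in that reference: connectivity and local finiteness of the marking graph, cocompactness via finitely many $\Mod(S)$--orbits of markings, and properness via the Alexander method applied to the filling system formed by the base curves together with the transversals. One caution about matching the definitions in this paper: here (following \cite{BKMM}) a transversal is an arbitrary diameter-one subset of the annular complex $\C(Y_\alpha)$, and both local finiteness and the finiteness of the orbit count are only literally true after normalizing to \emph{clean} markings, where each transversal is realized by a curve intersecting the corresponding base curve minimally and disjoint from the other base curves; this is the convention under which Masur--Minsky carry out exactly the count you sketch, and the quasi-isometry type is unaffected. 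Your identification of properness --- finiteness of the stabilizer of a complete marking --- as the substantive point is accurate, and the Alexander method is the standard tool for it.
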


Markings can also be projected to either curve complexes or marking graphs of subsurfaces.  Given $Y \subseteq Z \subseteq S$, and any $\mu$ in $\mathcal M(Z)$ we write $\pi_Y(\mu) \subset \C(Y)$ and $\pi_{\mathcal M(Y)}(\mu) \subset \mathcal M(Y)$ for these projections.  The projection $\pi_Y(\mu)$ is defined as the union of the projections of all base curves to $Y$, unless $Y$ is an annulus whose core curve is itself one of the base curves $\alpha \in b$.  In this latter situation, $Y = Y_\alpha$, and $\pi_Y(\mu)$ is defined as $t_\alpha \in \C(Y)$, the transversal of $\alpha$.  The projection to $\mathcal M(Y)$ is defined by an inductive procedure, making several choices, then taking the union over all choices.   Again, we will not need the specifics of these projections, but instead we list here various facts that will be important for us.

We begin with the following; see \cite{MM:CC2,Behrstock,BKMM}.

\begin{proposition} [Bounded Diameter Projection] \label{P:bounded diameter projection} There is a constant $\delta > 0$, depending
  on $S$, so that if $\mu$ is a marking or curve on $Z \subseteq S$ and
  $Y \subseteq Z$, then $\pi_Y(\mu)$ and $\pi_{\mathcal M(Y)}(\mu)$ has
  diameter at most $\delta$. 
  \end{proposition}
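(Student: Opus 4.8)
The plan is to reduce the statement to a single estimate on curve projections and then assemble the marking-graph bound from it. Since the base of a marking $\mu$ is a pants decomposition, hence a multicurve, and each transversal is by definition a diameter-$1$ subset of an annular curve graph, both $\pi_Y(\mu)$ and $\pi_{\M(Y)}(\mu)$ are controlled by the curve projections $\pi_Y(b)$ of the components $b$ of the base together with the transversal data. The core step is therefore to bound the diameter of $\pi_Y(\gamma)$ in $\C(Y)$ for a multicurve $\gamma$ cutting a non-annular $Y$; this also covers the case that $\mu$ is a single curve.

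For this core estimate, realize $\gamma$ and $Y$ with minimal intersection. As $\gamma$ is a disjoint union of simple closed curves, the arcs of $\gamma \cap Y$ are pairwise disjoint. By definition each curve in $\pi_Y(\gamma)$ is an essential, nonperipheral component of the boundary of a regular neighborhood of $\gamma' \cup \partial Y$ for a single arc $\gamma'$ of $\gamma \cap Y$. Given two such curves $\beta,\beta'$, away from a fixed collar of $\partial Y$ they lie near the disjoint arcs from which they arise and so are disjoint; within the collar each of $\beta,\beta'$ consists of only boundedly many strands, since each arises from a single arc with at most two endpoints on any one boundary component. Hence the geometric intersection number $i(\beta,\beta')$ is bounded in terms of the number of boundary components of $Y$, and therefore in terms of $\xi(S)$. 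The standard estimate converting bounded intersection number into bounded $\C(Y)$-distance (see \cite{MM:CC1,MM:CC2}) then yields a bound on $d_{\C(Y)}(\beta,\beta')$, and hence on $\diam \pi_Y(\gamma)$, depending only on $S$. Applying this with $\gamma$ the base of $\mu$ bounds $\diam \pi_Y(\mu)$ when $Y$ is not an annulus on a base curve; when $Y = Y_\alpha$ is such an annulus, $\pi_Y(\mu) = t_\alpha$ has diameter $\le 1$ by definition.

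It remains to bound $\diam \pi_{\M(Y)}(\mu)$, and here I would use the inductive structure of the marking projection. The base of any marking in $\pi_{\M(Y)}(\mu)$ is assembled by successively projecting to curve graphs of subsurfaces of $Y$ of strictly descending complexity, and at each stage the bounded-diameter estimate above constrains the available choices; the transversals are governed by the analogous annular projections. Because $\xi(Y) \le \xi(S)$ bounds the number of stages, any two markings obtained from different choices differ by a number of elementary moves bounded in terms of $S$, giving a uniform bound on $\diam \pi_{\M(Y)}(\mu)$. Taking $\delta$ to be the largest of the constants produced in the curve-graph, transversal, and marking-graph estimates proves the proposition.

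The main obstacle is this last, marking-graph case. The curve-projection bound is essentially elementary once one observes that the arcs of a multicurve are disjoint, but controlling $\pi_{\M(Y)}$ requires tracking how the many choices in its inductive definition propagate and verifying that a bounded discrepancy at each complexity level accumulates to a bounded total number of elementary moves. This is precisely the analysis carried out in \cite{Behrstock,BKMM}, which I would either cite directly or reproduce by induction on $\xi(Y)$.
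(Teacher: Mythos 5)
The paper does not prove this proposition at all: it is imported verbatim from the literature (``see \cite{MM:CC2,Behrstock,BKMM}''), so there is no in-paper argument to compare against. Your sketch is essentially the standard proof from those sources. The core curve-graph estimate---disjoint arcs of a multicurve produce projected curves of uniformly bounded intersection number, hence uniformly bounded $\C(Y)$-distance---is exactly Masur--Minsky's bounded-diameter lemma (Lemma 2.3 of \cite{MM:CC2}), and your outline of it is correct; in fact the bound there is an absolute constant (diameter at most $2$, or $3$ for annuli), not one that needs to grow with $\xi(S)$. Two small points: you only treat the annular case $Y = Y_\alpha$ for $\alpha$ a base curve, but a general annulus $Y$ cut by the base requires the (equally easy, but different) argument that disjoint curves lift to disjoint arcs in the annular cover; and your marking-graph step is an honest placeholder rather than a proof---the claim that bounded discrepancies at each stage of the inductive construction accumulate to a bounded number of elementary moves is precisely the content of the Behrstock/BKMM analysis, which you correctly identify as the part you would have to cite or reproduce. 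Given that the paper itself treats the whole statement as a black box, citing at that point is entirely appropriate.
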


For any two curves or markings $\mu_1,\mu_2$ in $Z \subseteq S$ and $Y \subseteq Z$ (with $\mu_1,\mu_2 \pitchfork Y$ if $\mu_1,\mu_2$ are curves), we define
\[ d_Y(\mu_1,\mu_2) = \diam_{\C(Y)}(\pi_Y(\mu_1) \cup \pi_Y(\mu_2)).\]
Similarly, for $\mu_1,\mu_2 \in \mathcal M(Z)$, define
\[ d_{\mathcal M(Y)}(\mu_1,\mu_2) = \diam_{\mathcal
  M(Y)}(\pi_{\mathcal M(Y)}(\mu_1) \cup \pi_{\mathcal
  M(Y)}(\mu_2)).\]
This particular choice of distance makes the triangle inequality hold whenever the relevant projections are nonempty. Along with Proposition~\ref{P:bounded diameter projection}, another basic fact is that projections are Lipschitz.
\begin{proposition} [Lipschitz projection] \label{P:Lipschitz projections}
There exists a constant $\delta'$, depending on $S$, so that for all $Y \subseteq Z \subseteq S$ and $\mu_1,\mu_2 \in  \mathcal M(Z)$,
\[ d_Y(\mu_1,\mu_2), d_{\mathcal M(Y)}(\mu_1,\mu_2) \leq \delta' d_{\mathcal M(Z)}(\mu_1,\mu_2). \]
\end{proposition}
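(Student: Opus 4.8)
The plan is to bound the projection distance one elementary move at a time and then sum along a geodesic in $\mathcal M(Z)$. Concretely, set $n = d_{\mathcal M(Z)}(\mu_1,\mu_2)$ and choose a geodesic $\mu_1 = \nu_0, \nu_1, \dots, \nu_n = \mu_2$ in the marking graph, so consecutive $\nu_i,\nu_{i+1}$ differ by a single elementary move. Each $\nu_i$ is a full marking on $Z$, so its base is a pants decomposition of $Z$, which necessarily cuts every essential $Y \subseteq Z$: for non-annular $Y$ some base curve lies in or crosses $Y$ since $\xi(Y)\ge 1$, and for annular $Y$ the core is either a base curve or is crossed by a base curve. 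Hence every projection $\pi_Y(\nu_i)$ and $\pi_{\mathcal M(Y)}(\nu_i)$ is nonempty, and the triangle inequality noted in the excerpt applies along the entire path:
\[ d_Y(\mu_1,\mu_2) \le \sum_{i=0}^{n-1} d_Y(\nu_i,\nu_{i+1}), \qquad d_{\mathcal M(Y)}(\mu_1,\mu_2) \le \sum_{i=0}^{n-1} d_{\mathcal M(Y)}(\nu_i,\nu_{i+1}). \]
It therefore suffices to produce a constant $C = C(S)$ with $d_Y(\nu,\nu') \le C$ and $d_{\mathcal M(Y)}(\nu,\nu') \le C$ whenever $\nu,\nu'$ differ by one elementary move; then $\delta' = C$ works for both inequalities simultaneously.

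Next I would establish the per-move bound by recalling the two types of elementary moves. For a \emph{twist} move the base pants decomposition is unchanged and only the transversal $t_\alpha$ of a single base curve $\alpha$ is altered by a Dehn (or half-)twist about $\alpha$. Thus $\pi_Y$ is literally unchanged unless $Y$ is the annulus $Y_\alpha$, in which case $\pi_{Y_\alpha}(\nu) = t_\alpha$ and $\pi_{Y_\alpha}(\nu')$ differ by one twist, moving a bounded distance in $\C(Y_\alpha)$; the analogous statement holds in $\mathcal M(Y_\alpha)$. For a \emph{flip} move the curve $\alpha$ and its transversal are interchanged and the remaining transversals are cleaned up, with the effect that the two bases, together with all transversals, can be realized with intersection number bounded solely in terms of $S$. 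The desired bound then follows from the standard estimate that bounded intersection number forces bounded subsurface projection, i.e.\ there is $f(S)$ with $d_Y(\gamma,\gamma') \le f(S)$ for any two curves of intersection number at most some $i_0(S)$ (see \cite{MM:CC2,Behrstock}), combined with the Bounded Diameter Projection (Proposition~\ref{P:bounded diameter projection}) applied to each pants decomposition so that $\diam \pi_Y$ of each base is at most $\delta$. Taking $C$ to be the maximum of the finitely many resulting bounds completes the claim.

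The routine but genuinely delicate step is the flip case: one must verify that replacing $\alpha$ by its transversal and re-cleaning the other transversals changes the projection to \emph{every} subsurface $Y\subseteq Z$ by a bounded amount. This requires separately controlling the change of the base curves (where the bounded-intersection estimate applies via Proposition~\ref{P:bounded diameter projection}), the annular projection at $Y_\alpha$ (whose core leaves the base), and the annular projections $Y_\beta$ at surviving base curves $\beta$ whose transversals are modified during cleanup. Each change is bounded because the elementary move is a local modification of uniformly bounded topological complexity, but assembling a single uniform $C$ across all $Y$ is where the care lies. I expect no difficulty in the reduction or the twist case; the main obstacle is marshalling the bounded-intersection/bounded-projection estimates uniformly over all subsurfaces in the flip case.
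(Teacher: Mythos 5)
The paper gives no proof of this proposition: it is quoted as a standard fact from the literature (\cite{MM:CC2,Behrstock,BKMM}), so there is no in-paper argument to compare against. Your outline is the standard proof from those references --- subdivide a geodesic in $\mathcal M(Z)$ into elementary moves, observe that every full marking on $Z$ has nonempty projection to every $Y\subseteq Z$ so the triangle inequality telescopes, and bound the effect of a single move --- and it is correct in structure. The one place where your write-up leans on an unproved assertion is the per-move bound for $d_{\mathcal M(Y)}$: ``bounded intersection number forces bounded subsurface projection'' is the right statement for $\C(Y)$, but for the marking graph you additionally need that two markings on $Y$ of uniformly bounded intersection number are uniformly close in $\mathcal M(Y)$; this follows from the fact that $\Mod(Y)$ acts cocompactly on $\mathcal M(Y)$ and there are only finitely many $\Mod(Y)$--orbits of pairs of markings with intersection number below a given bound, and is worth stating explicitly since it is the actual engine behind the flip-move case. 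With that supplied, the argument is the same one the cited sources carry out.
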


A strong boundedness property of projections is the following, due to Masur and Minsky \cite{MM:CC2}.

\begin{proposition} [Bounded Geodesic Image] \label{P:BGI} There exists $M > 0$, depending on $S$, so that for any two curves or markings $\mu_1,\mu_2$ on $Z \subseteq S$ and proper subsurface $Y \subsetneq Z$ (with $\mu_1,\mu_2 \pitchfork Y$ if $\mu_1,\mu_2$ are curves), if $d_Y(\mu_1,\mu_2) \geq M$, then any geodesic between $\mu_1,\mu_2$ in $\C(Z)$ must have a vertex $\alpha$ so that $\pi_Y(\alpha) = \emptyset$. 
\end{proposition}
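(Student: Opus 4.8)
This is the Masur--Minsky Bounded Geodesic Image theorem, so the plan is to reconstruct their argument. I would first rephrase the conclusion as a diameter bound: it suffices to find a uniform $M$ so that for any geodesic $g = [v_0, v_1, \dots, v_n]$ in $\C(Z)$ \emph{every} vertex of which cuts $Y$ (equivalently $\pi_Y(v_i) \neq \emptyset$ for all $i$), one has $\diam_{\C(Y)}(\pi_Y(v_0) \cup \cdots \cup \pi_Y(v_n)) \le M$. Granting this, the theorem follows by contraposition: taking a geodesic from $\mu_1$ to $\mu_2$ in $\C(Z)$ (after replacing markings by base curves cutting $Y$ and absorbing the resulting error into constants via Proposition~\ref{P:bounded diameter projection}), if $d_Y(\mu_1,\mu_2) \ge M$ then the diameter bound is violated, so some vertex must have empty projection. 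The annular case, where $\pi_Y$ is computed from arcs in the annular cover $\widetilde Y$, is handled in parallel, since the one-step estimate below holds there as well.

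The one easy ingredient is a one-step Lipschitz bound: if $u,w$ are adjacent in $\C(Z)$, i.e.\ disjoint, and both cut $Y$, then $d_Y(u,w) \le c_0$ for a uniform $c_0$. This is immediate from Bounded Diameter Projection (Proposition~\ref{P:bounded diameter projection}), since disjoint curves have disjointly realizable projections, so $\pi_Y(u)\cup\pi_Y(w)$ has diameter at most $\delta+1$. Telescoping along $g$ yields only $d_Y(v_0,v_n)\le c_0 n$, which is useless on its own because $n = d_{\C(Z)}(v_0,v_n)$ is unbounded; the entire content of the theorem is that the bound is \emph{independent} of $n$. The mechanism forcing this is visible in the model example where $a, c$ fill a one-holed torus $Y \subset Z$ and $u = c$, $w = T_a^k(c)$: here $d_Y(u,w) \sim k$ is huge while $d_{\C(Z)}(u,w) \le 2$, because $u$ and $w$ are both disjoint from a curve $e$ lying \emph{outside} $Y$. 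Thus the only way a short $\C(Z)$--path can move the $Y$--projection a long way is to pass through a curve with empty projection to $Y$; the theorem asserts this is unavoidable.

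Turning this into a uniform bound is the crux and the main obstacle, and it is exactly where hyperbolicity of the curve graphs enters. I would argue by contradiction: if $D := \diam_{\C(Y)}(\pi_Y(g))$ were large compared with $M, c_0, \delta$, pick vertices $v_a, v_b$ of $g$ nearly realizing $D$ and a $\C(Y)$--geodesic $\sigma$ from $\pi_Y(v_a)$ to $\pi_Y(v_b)$; the one-step bound forces the projections $\pi_Y(v_a),\dots,\pi_Y(v_b)$ to coarsely track $\sigma$, so some $v_c$ projects near its midpoint, far in $\C(Y)$ from both endpoints. The set $\mathcal N(Y)$ of curves disjoint from $Y$ (those with empty projection, e.g.\ the components of $\partial Y$ and all curves in $Z\setminus Y$) is precisely the locus through which the $Y$--projection can jump cheaply, as the model example shows. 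Using $\delta$--hyperbolicity of $\C(Z)$ to control the geodesic $g$ relative to $\mathcal N(Y)$---via the Masur--Minsky contraction estimate for nearest-point projection, or the later surgery argument of Webb giving a constant depending only on $\xi(S)$---one shows that if the projection of $g$ makes more than a bounded amount of monotone progress in the hyperbolic space $\C(Y)$, then a vertex of $g$ near $v_c$ must lie in $\mathcal N(Y)$, contradicting the hypothesis that every vertex cuts $Y$. Quantifying ``monotone progress forces a vertex with empty projection'' is the genuinely hard step; everything else is bookkeeping with the projection estimates already recorded.
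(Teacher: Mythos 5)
First, note that the paper does not prove this proposition: it is quoted from Masur--Minsky \cite{MM:CC2} as a black box, so there is no internal argument to compare yours against. Judged on its own terms, your outline correctly identifies the architecture of the known proof --- the reduction to a diameter bound on $\pi_Y$ of a geodesic all of whose vertices cut $Y$, the one-step Lipschitz estimate for adjacent vertices, and the observation that telescoping is useless because the final bound must be independent of the length of the geodesic. The model example with $T_a^k(c)$ is also the right picture for why the statement has content. (A small inaccuracy: for non-annular $Y$ the projections of two disjoint curves cutting $Y$ are not disjointly realizable in general, since both contain boundary components of neighborhoods meeting $\partial Y$; the correct statement is only that they are uniformly close in $\C(Y)$, which is all you need.)

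However, there is a genuine gap at exactly the point you flag yourself: the step ``if the projection of $g$ makes more than a bounded amount of progress in $\C(Y)$, then a vertex of $g$ must lie in $\mathcal N(Y)$'' \emph{is} the theorem, and you do not prove it --- you appeal to ``the Masur--Minsky contraction estimate'' or ``Webb's surgery argument,'' which is to say you cite the result you are trying to establish. Nothing in the sketch explains why $\delta$-hyperbolicity of $\C(Z)$ together with the one-step bound forces a vertex with empty projection: those two facts alone are consistent with the projection drifting linearly along the geodesic, so a substantive additional input is required. In Masur--Minsky's argument this is an analysis of how geodesics in $\C(Z)$ interact with the set of curves missing $Y$ (one must show this set is quasiconvex and that a geodesic whose endpoints have far-separated $Y$-projections is forced to enter a uniform neighborhood of it, and then upgrade ``near a curve missing $Y$'' to ``equal to a curve missing $Y$''); in Webb's proof it is an explicit surgery that, from a geodesic with all projections nonempty and large projection diameter, produces a strictly shorter path, contradicting geodesity. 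As written, your proposal is an accurate table of contents for the proof rather than a proof.
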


Another important important bound for projections is the following, due to Behrstock \cite{Behrstock} (see also \cite{Mangahas}).

\begin{proposition} [Behrstock Inequality] \label{P:Behrstock} There exists $B > 0$ so that if $Y,Z \subsetneq S$ and $\gamma \in \C(S)$ is any curve with $\gamma \pitchfork Y,Z$, then
\[ \min \{ d_Y(\gamma,\partial Z),d_Z(\gamma,\partial Y) \} < B. \]
\end{proposition}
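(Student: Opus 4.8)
The plan is to prove the contrapositive in an asymmetric form: I fix a threshold $B_0$ and show that if $d_Y(\gamma,\partial Z) \geq B_0$, then $d_Z(\gamma,\partial Y)$ is bounded above by a universal constant; taking $B$ larger than both the threshold and this bound then gives the claim. The basic principle I would exploit is a topological dichotomy for subsurface projections: for curves $\alpha,\beta$ both cutting a subsurface $Y$, if the arc systems $\alpha \cap Y$ and $\beta \cap Y$ can be realized with disjoint interiors in $Y$, then $\pi_Y(\alpha)$ and $\pi_Y(\beta)$ lie a uniformly bounded distance apart in $\C(Y)$, since each projected curve is built from a regular neighborhood of such an arc together with $\partial Y$, so disjoint arcs yield disjoint or nearly-disjoint projections. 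Contrapositively, once $d_Y(\gamma,\partial Z)$ exceeds this uniform constant, the restrictions $\gamma \cap Y$ and $\partial Z \cap Y$ must intersect essentially inside $Y$.

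First I would realize $\gamma$, $\partial Y$, and $\partial Z$ pairwise in minimal position, which is possible since $Y \pitchfork Z$ forces $\partial Z \pitchfork Y$ and $\partial Y \pitchfork Z$, while $\gamma \pitchfork Y,Z$ by hypothesis. Applying the dichotomy to $\alpha = \gamma$ and $\beta = \partial Z$ inside $Y$, the assumption $d_Y(\gamma,\partial Z) \geq B_0$ produces an essential intersection point $p$ of $\gamma$ with $\partial Z$ lying in $Y$. Following $\gamma$ across $\partial Z$ at $p$, I obtain a strand of $\gamma$ that passes from the part of $Y$ outside $Z$ into $Z$; isolating the component of $\gamma \cap Z$ containing this crossing gives an arc $c$ of $\gamma \cap Z$ with an endpoint on $\partial Z$ that is forced to run alongside $\partial Y$, since the strand enters $Z$ immediately after leaving a neighborhood of $\partial Y$ inside $Y$.

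The heart of the argument is then to convert this adjacency into a bound in $\C(Z)$: the arc $c$ witnesses a curve in $\pi_Z(\gamma)$ whose intersections with the arc system $\partial Y \cap Z$ are controlled, so that $c$ and $\partial Y \cap Z$ can be pushed to be disjoint, or nearly so, in $Z$. Applying the same dichotomy now with roles reversed, namely $\alpha = \gamma$ and $\beta = \partial Y$ inside $Z$, this disjointness yields $d_Z(\gamma,\partial Y) \leq c_0$ for a universal constant $c_0$. Setting $B = \max\{B_0,c_0\} + 1$ completes the proof, and since $B_0$ and $c_0$ depend only on the combinatorics of how regular-neighborhood projections interact, and not on $Y$, $Z$, or $\gamma$, the resulting $B$ is universal as required.

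I expect the main obstacle to be the middle step: cleanly extracting the arc $c$ of $\gamma \cap Z$ and verifying that it genuinely runs close to $\partial Y$ in $\C(Z)$ rather than wandering. The subtlety lies in bookkeeping the way a single strand of $\gamma$ alternates between $Y \setminus Z$, $Y \cap Z$, and $Z \setminus Y$, and in ensuring that the crossing of $\partial Z$ we isolate is the one adjacent to $\partial Y$; this forces a careful choice of the point $p$ and the sub-arc, together with a check that no uncontrolled wrapping occurs before the strand reaches $\partial Y$. Extracting an honest universal constant here, rather than mere finiteness, is the delicate part, and it is precisely where the real work of Behrstock's and Mangahas's treatments is concentrated.
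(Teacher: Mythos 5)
The paper does not prove this proposition; it is quoted from Behrstock, with Mangahas cited for the effective elementary proof, so there is no in-paper argument to compare against. Judged on its own, your outline has the right overall shape: prove the asymmetric contrapositive, invoke the principle that disjoint arc systems in a subsurface have uniformly close projections, and then try to locate an arc of $\gamma \cap Z$ whose $Z$--projection is uniformly close to $\pi_Z(\partial Y)$. But the step that carries all the content is asserted rather than proved, and as stated it does not work.

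Concretely: from $d_Y(\gamma,\partial Z)\geq B_0$ you extract a single essential intersection point $p$ of $\gamma$ with $\partial Z$ inside $Y$, and claim the component $c$ of $\gamma\cap Z$ through $p$ ``is forced to run alongside $\partial Y$ since the strand enters $Z$ immediately after leaving a neighborhood of $\partial Y$.'' Neither half of this is justified. The strand of $\gamma$ need not be anywhere near $\partial Y$ when it crosses $\partial Z$ at $p$ (the hypothesis only places the crossing inside $Y$), and even if it were, the arc $c$ continues past $p$ with no constraint: it can exit $Y$ and wind arbitrarily in $Z\setminus Y$ before returning to $\partial Z$, so its $Z$--projection can be arbitrarily far from $\pi_Z(\partial Y)$. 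A single essential intersection is far too weak a consequence of the hypothesis; the correct contrapositive of the disjointness principle gives that \emph{every} arc of $\gamma\cap Y$ must essentially cross \emph{every} arc of $\partial Z\cap Y$, and the real work (in Mangahas's treatment) is to use the resulting nested crossing pattern to trap a sub-arc of $\gamma$ inside $Y\cap Z$ parallel to an arc of $\partial Y\cap Z$, with explicit constants; Behrstock's original proof instead runs a non-effective coarse-geometric argument. You also never address the cases where $Y$ or $Z$ is an annulus, where ``minimal position and intersection points'' must be replaced by the annular-cover definition and the constant comes from counting twisting. Since you yourself flag this middle step as where the real work is concentrated, the proposal should be regarded as an outline with its essential lemma missing rather than a proof.
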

Given two markings $\mu_1,\mu_2 \in \M(S)$ and $\beta > 0$, define
\[ \Omega^\circ_\beta(\mu_1,\mu_2) = \{Y \subsetneq S \mid d_Y(\mu_1,\mu_2) \geq \beta \}\]
and
\[ \Omega_\beta (\mu_1,\mu_2) = \{ Z \subseteq S \mid Z \mbox{ is filled by subsurfaces } Y \subseteq Z \mbox{ with } Y \in \Omega_\beta^\circ(\mu_1,\mu_2) \}.\]
Here, we say that a subsurface $Z$ is {\em filled} by a collection of subsurfaces $\{Y_\alpha \subseteq Z\}_\alpha$ if either $Z$ is an annulus and $\{Y_\alpha\}_\alpha = \{Z\}$, or $Z$ is not an annulus and for every curve $\gamma \in \C(Z)$, there exists $\alpha$ so that $\gamma \pitchfork Y_\alpha$.
The following is a straightforward consequence of Proposition~\ref{P:Behrstock} (Behrstock Inequality) proven in \cite{BKMM} (see also \cite{CLM:RAAGS}).
\begin{proposition} [Subsurface Order] \label{P:subsurface order} Given two markings $\mu_1,\mu_2$ and $\beta > 2B$ (with $B > 0$ from Proposition~\ref{P:Behrstock} (Behrstock Inequality)) there is a partial order on $\Omega^\circ_\beta(\mu_1,\mu_2)$ such that $Y,Z \in \Omega^\circ_\beta(\mu_1,\mu_2)$ are comparable if and only if $Y \pitchfork Z$.  In this case, the following are equivalent
\[ \begin{array}{rlrlrl}
(1) & Y < Z,  \quad & (2) & d_Y(\mu_1,\partial Z) \geq B, \quad & (3) & d_Z(\mu_2,\partial Y) \geq B, \quad \\
& & (4) & d_Z(\mu_1,\partial Y) < B, & (5) & d_Y(\mu_2,\partial Z) < B. \end{array} \]
\end{proposition}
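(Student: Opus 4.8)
My plan is to derive the whole statement from the Behrstock Inequality (Proposition~\ref{P:Behrstock}) applied to the markings $\mu_1,\mu_2$, together with the triangle inequality. The proposition is stated for a single curve, so I would first record its marking form: since a full marking fills $S$ it cuts every subsurface, and if $d_Y(\mu,\partial Z)$ is large then some base curve of $\mu$ cutting $Y$ also cuts $Z$ (a base curve disjoint from $Z$ would be disjoint from $\partial Z$ and hence project close to $\partial Z$ in $\C(Y)$, using that disjoint curves project uniformly close, \cite{MM:CC2}); applying the curve version to that base curve and using bounded diameter of projections (Proposition~\ref{P:bounded diameter projection}) gives $\min\{d_Y(\mu,\partial Z),d_Z(\mu,\partial Y)\}<B$ after a controlled increase of $B$. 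A second, routine observation is that the four quantities in (2)--(5) are defined precisely when $Y\pitchfork Z$: overlap forces $\partial Z$ to cut $Y$ and $\partial Y$ to cut $Z$, so the relevant projections are nonempty, whereas if $Y,Z$ are disjoint or nested then at least one of $\pi_Y(\partial Z),\pi_Z(\partial Y)$ is empty. This is what yields the ``comparable if and only if $Y\pitchfork Z$'' clause once $<$ is defined.

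Next I would fix an overlapping pair $Y\pitchfork Z$ with $Y,Z\in\Omega^\circ_\beta(\mu_1,\mu_2)$ and set $a_i=d_Y(\mu_i,\partial Z)$, $c_i=d_Z(\mu_i,\partial Y)$. Since $\beta>2B$, the triangle inequality gives $a_1+a_2\ge d_Y(\mu_1,\mu_2)\ge\beta>2B$ and likewise $c_1+c_2>2B$, so at least one $a_i$ and at least one $c_i$ exceeds $B$. Behrstock applied to $\mu_1$ and to $\mu_2$ gives $\min\{a_1,c_1\}<B$ and $\min\{a_2,c_2\}<B$. One cannot have both $a_1,a_2\ge B$, for then $c_1,c_2<B$, contradicting $c_1+c_2>2B$; hence exactly one $a_i$ is $\ge B$, which is exactly (2)$\Leftrightarrow$(5), and symmetrically exactly one $c_i$ is $\ge B$, i.e.\ (3)$\Leftrightarrow$(4). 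The two Behrstock inequalities then chain the $Y$-conditions to the $Z$-conditions ($a_1\ge B\Rightarrow c_1<B$ and $c_2\ge B\Rightarrow a_2<B$), so all of (2)--(5) are equivalent; I define $Y<Z$ to mean they hold. Antisymmetry is automatic because $Z<Y$ is condition (2) with the roles reversed, namely $c_1\ge B$, which is the exact negation of (4); thus for $Y\pitchfork Z$ exactly one of $Y<Z$, $Z<Y$ holds.

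The substantial step is transitivity: given $Y<Z$ and $Z<W$ (so $Y\pitchfork Z\pitchfork W$, all in $\Omega^\circ_\beta$) I must show $Y\pitchfork W$ and $Y<W$. The key device is to upgrade the uniform threshold $B$ to genuinely large gaps. From condition (5) of $Z<W$, $d_Z(\mu_2,\partial W)<B$, together with $d_Z(\mu_1,\mu_2)\ge\beta$, I get $d_Z(\mu_1,\partial W)\ge\beta-B$; combined with $d_Z(\mu_1,\partial Y)<B$ (condition (4) of $Y<Z$) this yields $d_Z(\partial Y,\partial W)\ge\beta-2B$. If $Y$ and $W$ were disjoint or nested their boundaries could be realized disjointly, forcing their $\C(Z)$-projections within a universal constant (\cite{MM:CC2}); for $\beta$ large this contradicts $d_Z(\partial Y,\partial W)\ge\beta-2B$, so $Y\pitchfork W$.

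With overlap in hand I determine the direction by ruling out $W<Y$. Running the same upgrade in $\C(W)$ and $\C(Y)$, the conditions (5) and (4) of the hypothetical $W<Y$ together with the large gaps above give both $d_W(\partial Z,\partial Y)\ge\beta-2B$ and $d_Y(\partial Z,\partial W)\ge\beta-2B$. But $\partial Z$ cuts both $Y$ and $W$, so Behrstock applied to $\partial Z$ with the pair $Y,W$ forces $\min\{d_Y(\partial Z,\partial W),d_W(\partial Z,\partial Y)\}<B$, which is incompatible with both being $\ge\beta-2B$ once $\beta$ is sufficiently large. Hence $W<Y$ is impossible and $Y<W$. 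I expect this transitivity argument to be the main obstacle, and in particular the bookkeeping that turns the threshold $B$ into the gaps $\beta-2B$ needed to trigger the Behrstock contradiction; this is exactly where $\beta>2B$ is used, after absorbing the relevant universal constants into $B$ as in \cite{BKMM}. The equivalences (2)--(5), antisymmetry, and the comparability clause are comparatively formal consequences of the triangle inequality and two applications of Behrstock.
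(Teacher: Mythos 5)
The paper offers no proof of this proposition---it is quoted from \cite{BKMM} (see also \cite{CLM:RAAGS})---so there is no internal argument to compare against; your reconstruction is the standard one from those sources, and its structure is essentially correct. The reduction of the Behrstock inequality to markings, the observation that the projections in (2)--(5) are nonempty exactly when $Y\pitchfork Z$, and the dichotomy ``exactly one of $d_Y(\mu_1,\partial Z),d_Y(\mu_2,\partial Z)$ is $\ge B$'' obtained from $d_Y(\mu_1,\partial Z)+d_Y(\mu_2,\partial Z)\ge d_Y(\mu_1,\mu_2)\ge\beta>2B$ together with two applications of Behrstock, are all right, and they do yield the equivalence of (2)--(5) and antisymmetry. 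The transitivity scheme (force $d_Z(\partial Y,\partial W)$ to be large, conclude $Y\pitchfork W$ from the bounded projection distance of disjoint curves, then apply Behrstock to the curve $\partial Z$, resp.\ $\partial Y$, to exclude $W<Y$) is likewise the standard route.

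One quantitative point deserves attention. Your transitivity step needs $d_Z(\partial Y,\partial W)>\beta-2B$ both to exceed the universal bound on the $\C(Z)$-distance between projections of disjoint curves and, more importantly, to reach the threshold $B$ that triggers the Behrstock inequality; the latter forces $\beta\ge 3B$. This is not repaired by ``absorbing universal constants into $B$,'' because the required lower bound on $\beta$ scales with $B$ itself. So the argument you give (and, as far as I can see, any argument of this shape) proves the proposition for $\beta$ at least $3B$ plus a universal additive constant, not for every $\beta>2B$ as the statement literally reads. This discrepancy is harmless for the paper, which only invokes the proposition with $\beta>5b\ge 5B$, but in your write-up you should either strengthen the hypothesis on $\beta$ accordingly or justify the $2B$ threshold by a different mechanism.
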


One final fact about projection distances is the following theorem.  
Given $\beta > 0$ and $x \in \mathbb R$ we write
\[ \{\!\{x \}\!\}_\beta  = \left\{ \begin{array}{ll} x & \mbox{ if } x \geq \beta\\ 0 & \mbox{ otherwise.} \end{array} \right. \]
\begin{theorem} [Distance Formula] \label{T:distance formula}  Given any $\beta > 0$ sufficiently large there exists $\kappa \geq 1$ with the following property.  If $\mu_1,\mu_2 \in \M(S)$ and $Z \subseteq S$, then
\[ \frac1\kappa d_{\mathcal M(Z)}(\mu_1,\mu_2) \leq \sum_{Y \subseteq Z} \{\!\{d_Y(\mu_1,\mu_2)\}\!\}_\beta \leq \kappa d_{\mathcal M(Z)}(\mu_1,\mu_2) ,\]
for all $\mu_1,\mu_2$ such that either the sum in the middle is nonzero or $d_{\mathcal M(Z)}(\mu_1,\mu_2) \geq \kappa$.  Furthermore, when $Z = S$, there are only finitely many $\Mod(S)$--orbits of pairs $(\mu_1,\mu_2)$ (under the diagonal action) in which the middle term is zero.
\end{theorem}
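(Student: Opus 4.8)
The statement is the Masur--Minsky distance formula, and the plan is to establish the two inequalities separately and then argue the finiteness clause on its own. Throughout I fix $\beta$ larger than all the thresholds appearing in Propositions~\ref{P:Behrstock} (Behrstock Inequality) and~\ref{P:subsurface order} (Subsurface Order), so that the subsurface order is available on $\Omega^\circ_\beta(\mu_1,\mu_2)$. The upper bound, where the sum is dominated by the distance, is the routine direction, while the lower bound, where the distance is dominated by the sum, is where the real work lies; this asymmetry reflects the fact that lower-bounding a distance requires producing a genuinely efficient path rather than merely estimating along an arbitrary one.

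For the upper bound I would fix a geodesic $\mu_1 = m_0, m_1, \dots, m_N = \mu_2$ in $\mathcal M(Z)$, so that $N = d_{\mathcal M(Z)}(\mu_1,\mu_2)$, and for each $Y \subseteq Z$ track the coarse path $\pi_Y(m_0),\dots,\pi_Y(m_N)$ in $\C(Y)$. By Proposition~\ref{P:Lipschitz projections} (Lipschitz projection), consecutive projections move a bounded amount, so for each $Y$ with $d_Y(\mu_1,\mu_2) \geq \beta$ there is a subinterval of indices along which $\pi_Y$ accumulates essentially all of $d_Y(\mu_1,\mu_2)$. The crux is to bound $\sum_{Y \subseteq Z} \{\!\{d_Y(\mu_1,\mu_2)\}\!\}_\beta$ by a constant multiple of $N$ without overcounting, and here I would invoke the subsurface order of Proposition~\ref{P:subsurface order}: the Behrstock inequality forces the active subintervals attached to overlapping subsurfaces to be essentially disjoint and to occur in the prescribed order. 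Charging each $d_Y$ to the length of its active subinterval and absorbing the additive Behrstock and Lipschitz constants into $\kappa$ then yields $\sum_{Y \subseteq Z} \{\!\{d_Y\}\!\}_\beta \leq \kappa\, N$.

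The lower bound is the main obstacle and requires constructing an explicit efficient path from $\mu_1$ to $\mu_2$ whose length is at most a constant multiple of the sum. The plan is the Masur--Minsky hierarchy construction: one assembles a family of tight geodesics, one in $\C(W)$ for each relevant $W \subseteq Z$, organized so that a geodesic in $\C(W)$ is included precisely when $W$ supports a large projection, and then resolves this hierarchy into a path in $\mathcal M(Z)$ by a sequence of elementary moves. Two estimates make this work: the number of elementary moves in the resolution is comparable to the sum of the lengths of the geodesics in the hierarchy, and Proposition~\ref{P:BGI} (Bounded Geodesic Image) guarantees that a subsurface with projection below threshold forces no geodesic into the hierarchy, while each $Y$ with large projection contributes a geodesic of length comparable to $d_Y(\mu_1,\mu_2)$. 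Combining these shows the resolution has length at most $\kappa \sum_{Y \subseteq Z} \{\!\{d_Y\}\!\}_\beta$, giving the remaining inequality on the range where the sum is nonzero. The hypothesis that either the sum is nonzero or $d_{\mathcal M(Z)}(\mu_1,\mu_2) \geq \kappa$ is exactly what is needed to convert the inevitable additive errors in these comparisons into the stated multiplicative ones.

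Finally, for the finiteness statement with $Z = S$, I would note that the middle term vanishes exactly when $d_Y(\mu_1,\mu_2) < \beta$ for every $Y \subseteq S$, including $Y = S$ and all annuli. Fixing $\mu_1$, a marking $\mu_2$ is determined by finite combinatorial data, namely its pants curves together with their transversals, and each piece of this data is confined to a finite set once all of its projections relative to $\mu_1$ are bounded by $\beta$; hence only finitely many $\mu_2$ satisfy all these bounds simultaneously. Since $\Mod(S)$ acts cocompactly on the locally finite graph $\mathcal M(S)$, there are finitely many orbits of markings $\mu_1$, and combining this with the previous finiteness produces finitely many diagonal orbits of pairs $(\mu_1,\mu_2)$ with zero middle term.
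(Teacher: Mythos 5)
The paper does not prove this theorem: it is imported wholesale from Masur--Minsky \cite{MM:CC2}, and the only content the paper supplies is a two-step reduction from the original statement --- the original formula carries an additive error which, since all quantities are integers, can be traded for the stated conditional validity by enlarging $\kappa$; and the original formula is stated only for $Z=S$, with the general case following from the coarse transitivity of nested subsurface projections (\cite[Lemma 2.12]{BKMM}). Your proposal instead sets out to reprove the theorem from scratch. As an outline of the actual Masur--Minsky argument it is faithful: the upper bound does go by projecting a geodesic in $\mathcal M(Z)$ and controlling overcounting via ordered, essentially disjoint active intervals, and the lower bound does go by assembling a hierarchy of tight geodesics and resolving it into a path of markings. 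But at this level of detail the lower bound is not a proof --- the existence of hierarchies, the resolution into elementary moves, and the comparison of the resolution's length with the total length of the hierarchy are the main theorems of \cite{MM:CC2} and cannot be absorbed into one paragraph. Your upper bound sketch also only treats overlapping subsurfaces; nested and disjoint subsurfaces can have coinciding active intervals, and bounding that overcounting (by a constant depending on $\xi(S)$) is a separate, if standard, step.

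Two smaller points. First, you never address the one issue the paper actually has to argue: the statement concerns markings $\mu_1,\mu_2\in\M(S)$ projected to a proper subsurface $Z$, so one must know that $d_Y(\mu_1,\mu_2)$ computed directly from the $\mu_i$ agrees coarsely with the quantity computed through $\pi_{\mathcal M(Z)}(\mu_i)$ for $Y\subseteq Z$; this is exactly the coarse transitivity cited above, and without it the formula for $Z$ does not follow from the formula for $Z$ viewed as a surface in its own right. Second, your finiteness argument is right in spirit but is cleaner as a direct consequence of the formula itself: if the middle term vanishes, the additive-error version bounds $d_{\mathcal M(S)}(\mu_1,\mu_2)$ by a constant, and local finiteness of $\mathcal M(S)$ together with cocompactness of the $\Mod(S)$--action gives finitely many diagonal orbits. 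The claim that bounding every subsurface projection confines the base curves and transversals to a finite set is itself essentially the lower bound of the theorem and should not be asserted as an independent observation.
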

The original distance formula, due to Masur-Minsky \cite{MM:CC2}, has an additive error (in addition to the multiplicative error $\kappa$) instead of the conditional validity of the inequality, which is more useful for our purposes.  Since the distances are all integers, the version here follows easily from the original one.   The original formula was also stated only for $S$ instead of for subsurfaces $Z \subseteq S$.  The variant for a subsurface follows from the ``coarse transitivity'' of iterated projections for nested subsurfaces (see \cite[Lemma 2.12]{BKMM}).

The following is an easy consequence of Theorem~\ref{T:distance formula} (Distance Formula).
\begin{corollary}\label{C:finitely many smallest-big subsurfaces} Suppose $\beta \geq 1$ (sufficiently large) and $\kappa \geq 1$ are as in Theorem~\ref{T:distance formula} (Distance Formula).  Then for all $\mu_1,\mu_2 \in \M(S)$, the set $\Omega_\beta(\mu_1,\mu_2)$ is finite.  Furthermore, if $d_{\M(Z)}(\mu_1,\mu_2) \geq \kappa \beta$, then there is a subsurface $W \subseteq  Z$ such that $W \in \Omega_\beta(\mu_1,\mu_2)$ and
\[ \frac{1}{\xi(Z)\kappa^2} d_{\M(Z)}(\mu_1,\mu_2) \leq d_{\M(W)}(\mu_1,\mu_2) \leq \kappa^2 d_{\M(Z)}(\mu_1,\mu_2) .\]
\end{corollary}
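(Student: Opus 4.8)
The plan is to get finiteness directly from the Distance Formula, then build $W$ by pigeonholing over maximal ``filled'' subsurfaces. For finiteness, applying Theorem~\ref{T:distance formula} with $Z=S$ gives $\sum_{Y\subseteq S}\{\!\{d_Y(\mu_1,\mu_2)\}\!\}_\beta\le\kappa\,d_{\M(S)}(\mu_1,\mu_2)<\infty$; since every nonzero term is $\ge\beta\ge1$, only finitely many $Y$ contribute, so $\Omega^\circ_\beta(\mu_1,\mu_2)$ is finite. I would then use that any finite collection of subsurfaces fills a \emph{unique} subsurface $V$ (put them in minimal position, thicken the union, and fill in complementary disks and once-punctured disks). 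Each $Z\in\Omega_\beta(\mu_1,\mu_2)$ is filled by $A_Z:=\{Y\in\Omega^\circ_\beta(\mu_1,\mu_2):Y\subseteq Z\}$, and one checks the fill of $A_Z$ is exactly $Z$: it lies in $Z$, and any boundary curve of the fill that is essential in $Z$ would be a curve of $Z$ disjoint from every member of $A_Z$, contradicting the filling condition. Hence $Z\mapsto A_Z$ injects $\Omega_\beta(\mu_1,\mu_2)$ into the finite power set of $\Omega^\circ_\beta(\mu_1,\mu_2)$, giving finiteness.

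Write $\sigma(W)=\sum_{Y\subseteq W}\{\!\{d_Y(\mu_1,\mu_2)\}\!\}_\beta$. Since $d_{\M(Z)}(\mu_1,\mu_2)\ge\kappa\beta\ge\kappa$, Theorem~\ref{T:distance formula} applies on $Z$ and $\sigma(Z)\ge\frac1\kappa d_{\M(Z)}(\mu_1,\mu_2)\ge\beta>0$. For any $W\subseteq Z$ with $\sigma(W)>0$ the upper bound is then automatic, as $\sigma(W)\le\sigma(Z)$ gives
\[ d_{\M(W)}(\mu_1,\mu_2)\le\kappa\,\sigma(W)\le\kappa\,\sigma(Z)\le\kappa^2\,d_{\M(Z)}(\mu_1,\mu_2). \]
So everything reduces to finding $W\in\Omega_\beta(\mu_1,\mu_2)$ with $W\subseteq Z$ and $\sigma(W)\ge\sigma(Z)/\xi(Z)$; the lower Distance Formula bound on $W$ together with $\sigma(Z)\ge\frac1\kappa d_{\M(Z)}(\mu_1,\mu_2)$ then yields the lower estimate. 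If $Z\in\Omega_\beta(\mu_1,\mu_2)$ I take $W=Z$. Otherwise I may assume $Z\subsetneq S$, and then $d_Z(\mu_1,\mu_2)<\beta$ (else $Z$ fills itself and lies in $\Omega_\beta(\mu_1,\mu_2)$), so $\sigma(Z)$ is the sum of $d_Y(\mu_1,\mu_2)$ over the contributing proper subsurfaces $Y\subseteq Z$, each of which fills itself and so lies in $\Omega_\beta(\mu_1,\mu_2)$.

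Let $W_1,\dots,W_k$ be the maximal elements, under inclusion, of $\{W\in\Omega_\beta(\mu_1,\mu_2):W\subseteq Z\}$. Every contributing $Y\subseteq Z$ lies in some $W_i$, so $\sum_i\sigma(W_i)\ge\sigma(Z)$ and some $\sigma(W_i)\ge\sigma(Z)/k$; thus it suffices to show $k\le\xi(Z)$. The key intermediate claim is that filling is ``transitive'': if $W,W'\in\Omega_\beta(\mu_1,\mu_2)$ and $V$ is filled by $W\cup W'$, then $V\in\Omega_\beta(\mu_1,\mu_2)$. Indeed, for $\gamma\in\C(V)$ the filling of $V$ gives (say) $\gamma\pitchfork W$; a curve of $\pi_W(\gamma)\subseteq\C(W)$ then cuts some contributing $Y\subseteq W$ because $W$ is filled, and coarse transitivity of projections for nested subsurfaces (\cite[Lemma 2.12]{BKMM}) upgrades this to $\gamma\pitchfork Y$. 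Consequently two maximal $W_i,W_j$ can be neither nested (by maximality) nor overlapping (their fill would be a strictly larger member of the family), hence are disjoint. Finally $k$ disjoint essential subsurfaces of $Z$ furnish $k$ disjoint, pairwise non-isotopic, essential non-peripheral curves of $Z$ (a core of each annular $W_i$, an interior non-peripheral curve of each of the rest), and any such curve system has at most $\xi(Z)$ members.

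I expect the transitivity-of-filling step, and the resulting disjointness of the maximal filled subsurfaces, to be the main obstacle: it is what converts the analytic pigeonhole into the combinatorial inequality $k\le\xi(Z)$, and it depends on the projection estimate \cite[Lemma 2.12]{BKMM} to transfer the cutting condition across nested subsurfaces. The remaining case $Z=S$ with the curve-graph term $d_S(\mu_1,\mu_2)$ dominating is either absorbed into the choice $W=Z\in\Omega_\beta(\mu_1,\mu_2)$ or does not occur in the application, where this term grows sublinearly and the large projections are confined to proper subsurfaces.
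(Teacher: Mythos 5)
Your argument is correct and follows the same skeleton as the paper's: finiteness of $\Omega^\circ_\beta$ and hence of $\Omega_\beta$ from the Distance Formula, then a pigeonhole over at most $\xi(Z)$ disjoint members of $\Omega_\beta$ inside $Z$ that capture all the large projections. The one place you genuinely diverge is in how that disjoint family is produced. The paper simply takes $W_1,\dots,W_r$ to be the \emph{components of the subsurface filled by the entire collection} $\{Y\subseteq Z: Y\in\Omega^\circ_\beta(\mu_1,\mu_2)\}$; disjointness and the bound $r<\xi(Z)$ are then immediate from the construction, and no transitivity lemma is needed. You instead take the maximal elements of $\Omega_\beta$ below $Z$ and must prove they are pairwise disjoint, which forces you through the ``transitivity of filling'' claim. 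That claim is true, but the tool you cite for it is not quite the right one: \cite[Lemma 2.12]{BKMM} bounds $d_Y(\gamma,\pi_W(\gamma))$ \emph{when both projections are nonempty}, so it does not by itself convert ``some curve of $\pi_W(\gamma)$ cuts $Y$'' into ``$\gamma$ cuts $Y$.'' The clean argument is direct: if $\gamma$ misses every member of a filling family of $W$, then (realizing everything geodesically) $\gamma$ is simultaneously disjoint from all of them, hence from a neighborhood of their union, hence from the filled subsurface $W$ itself (an essential curve cannot lie in the complementary disks or once-punctured disks that get added), contradicting $\gamma\pitchfork W$. With that substitution your route closes, at the cost of an extra lemma the paper's construction avoids. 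Finally, the $Z=S$ edge case you flag at the end (where $\{\!\{d_S(\mu_1,\mu_2)\}\!\}_\beta$ carries the sum but $S\notin\Omega_\beta$) is a genuine boundary issue, but it is present in the paper's proof as well --- the corollary is only ever invoked for proper $Z$ --- so noting it explicitly, as you do, is the right call rather than a defect.
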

\begin{proof} Since $d_{\M(S)}(\mu_1,\mu_2)$ is a finite number for any $\mu_1,\mu_2 \in \M(S)$, it follows from Theorem~\ref{T:distance formula} (Distance Formula) that there are only finitely many $Y$ with $d_Y(\mu_1,\mu_2) \geq \beta$; that is, $\Omega^\circ_\beta(\mu_1,\mu_2)$ is finite.  The finite set of subsurfaces $Z \subseteq S$ filled by the subsurfaces in $\Omega^\circ_\beta(\mu_1,\mu_2)$ is exactly $\Omega_\beta(\mu_1,\mu_2)$, proving the first statement.

Next, list the (finitely many) subsurfaces of $Z$ in $\Omega_\beta^\circ(\mu_1,\mu_2)$:
\[ \{Y \subseteq Z \mid Y \in \Omega_\beta^\circ(\mu_1,\mu_2)\} = \{Y_1,\ldots,Y_k\}.\]
Since $d_{\M(Z)}(\mu_1,\mu_2) \geq \kappa \beta \geq \kappa$, Theorem~\ref{T:distance formula} (Distance Formula) implies
\[ \sum_{j=1}^k d_{Y_j}(\mu_1,\mu_2) \geq \frac{1}{\kappa} d_{\M(Z)}(\mu_1,\mu_2),\]
so $\{Y_1,\ldots,Y_k\}$ is nonempty.  If these subsurfaces fill $Z$, then $Z \in \Omega_\beta(\mu_1,\mu_2)$ and we are done.  Otherwise, we let $W_1,\ldots,W_r$ be the component subsurfaces of $Z$ filled by $Y_1,\ldots,Y_k$, and note that $r < \xi(Z)$. Iteratively applying Theorem~\ref{T:distance formula} (Distance Formula) we have
\[ \sum_{i=1}^r d_{\M(W_i)}(\mu_1,\mu_2) \leq \kappa \sum_{j=1}^k d_{Y_j}(\mu_1,\mu_2)  \leq \kappa^2 d_{\M(Z)}(\mu_1,\mu_2).\]
and
\[ \sum_{i=1}^r d_{\M(W_i)}(\mu_1,\mu_2) \geq \frac{1}{\kappa} \sum_{j=1}^k d_{Y_j}(\mu_1,\mu_2)  \geq \frac{1}{\kappa^2} d_{\M(Z)}(\mu_1,\mu_2).\]
Let $i \in \{1,\ldots,r\}$ be the such that the term $d_{\M(W_i)}(\mu_1,\mu_2)$ in the sum above is largest, and set $W = W_i$ so that
\[ d_{\M(W)}(\mu_1,\mu_2) \geq \frac{1}{r}  \sum_{i=1}^r d_{\M(W_i)}(\mu_1,\mu_2) \geq \frac{1}{\xi(Z)}  \sum_{i=1}^r d_{\M(W_i)}(\mu_1,\mu_2).\]
Since $W \in \Omega_\beta(\mu_1,\mu_2)$, these inequalities complete the proof.
\end{proof}

In \cite{BBF}, the first and second authors, with Fujiwara, construct a partition of the set of subsurfaces into finitely many subsets that we will refer to as {\em BBF factors}.  The key property of a BBF factor is stated in the following.
\begin{proposition} [Overlapping Factors] \label{P:overlapping factors} 
For any surface, a BBF factor $\bY$ has the property that either $\bY = \{S\}$, or else, for all $Y \neq Y' \in \bY$, $Y \pitchfork Y'$.
\end{proposition}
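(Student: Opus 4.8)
The plan is to recall the partition of subsurfaces constructed in \cite{BBF} and to read the overlap property off its defining features, so that there is essentially no computation, only a matching of definitions. Two distinct subsurfaces $Y,Y'$ fail to overlap precisely when they are \emph{nested} (one is isotopic to a subsurface of the other) or \emph{disjoint} (they can be realized with disjoint interiors). Thus, apart from the behavior of $S$ itself, the statement asserts that the BBF partition is a finite coloring of the isotopy classes of subsurfaces in which no two subsurfaces of the same color are nested or disjoint.

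First I would dispose of the factor containing $S$. Every subsurface satisfies $Y \subseteq S$, so $S$ can be realized as a subsurface of nothing and overlaps nothing; hence any factor containing $S$ contains $S$ alone, which is exactly the exceptional case $\bY = \{S\}$ permitted in the statement. For the remaining factors I would separate the two obstructions. Nesting is governed by complexity: an essential, non-isotopic inclusion $Y \subsetneq Y'$ forces $\xi(Y) < \xi(Y')$, so within any family of subsurfaces of a single complexity there are no nested pairs, and nesting chains have length at most $\xi(S)$. Disjointness is the genuine difficulty, and it is what the construction of \cite{BBF} is built to control.

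The crux, which I would import directly from \cite{BBF}, is that the isotopy classes of subsurfaces admit a coloring with boundedly many colors (in terms of $\xi(S)$) in which same-colored subsurfaces are never disjoint---and, after stratifying by complexity, never nested---so that they overlap. I expect this finiteness to be the main obstacle: a priori one finds arbitrarily many pairwise disjoint subsurfaces, and although any single such family has only boundedly many members (at most $\xi(S)$), a bounded clique number does not by itself bound the number of colors required. The content of \cite{BBF} is precisely that the subsurface projection machinery organizing these subsurfaces---governed by the Behrstock inequality (Proposition~\ref{P:Behrstock})---forces this ``non-overlap'' relation to have finite chromatic number; the resulting color classes are the BBF factors. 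Granting this, the proposition is immediate: a non-singleton factor $\bY$ is one of these color classes, so any distinct $Y,Y' \in \bY$ are neither nested nor disjoint, i.e.\ $Y \pitchfork Y'$, as claimed.
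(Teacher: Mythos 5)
Your proposal is correct and matches the paper's treatment: the paper gives no proof of this proposition at all, stating it as the key property of the partition imported from \cite{BBF}, and you likewise (and appropriately) defer the substantive content---the finite coloring in which same-colored subsurfaces pairwise overlap---to that reference. Your supplementary observations (that the factor containing $S$ is a singleton, and that nesting is ruled out by monotonicity of $\xi$ under essential non-isotopic inclusion) are accurate but not needed beyond the citation.
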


\section{A criterion for reducibility.}

\begin{proposition}[Linear projection reducibility] \label{P:linear projection reducible}
Suppose $G < \Mod(S)$ is finitely generated and let $|g|$ denote the
word length of $g \in G$ with respect to a finite generating set and
let $\mu$ be a fixed marking.  Then for any $c >0$ there exists $R >
0$ so that if $|g| \geq R$ and if there exists a proper subsurface $Z \subset S$
with
\[ d_{\M(Z)}(\mu,g \mu) \geq c |g| \]
then $G$ contains a nontrivial reducible element.
\end{proposition}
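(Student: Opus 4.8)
The plan is to produce the reducible element as a \emph{coincidence of translates}. Fix a geodesic $1=g_0,g_1,\dots,g_n=g$ in $G$ (so $n=|g|$), and set $\mu_i=g_i\mu$ and $W_i=g_i^{-1}Z$. I would look for two indices $i<j$ with $W_i=W_j$, i.e. $g_i^{-1}Z=g_j^{-1}Z$. Such a coincidence produces $h:=g_jg_i^{-1}$ with $h\cdot Z=Z$ as an isotopy class of subsurface, and $h\neq 1$ because distinct vertices of a geodesic represent distinct group elements. Since $Z$ is a proper subsurface, $h$ then preserves the nonempty essential multicurve $\partial Z$ (or the core curve, if $Z$ is an annulus), so $h$ is the desired nontrivial reducible element. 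The entire task is therefore to \emph{force} such a coincidence from the hypothesis $d_{\M(Z)}(\mu,g\mu)\ge c|g|$.

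The mechanism combines equivariance of the projections with a pigeonhole argument on a finite set of subsurfaces. By naturality of subsurface projection, applying $g_i^{-1}$ identifies the ``move'' of the projected path over a block of the geodesic with a projection in a translated surface: for $w=g_i^{-1}g_{i+K}$,
\[ d_{\M(Z)}\bigl(\pi_{\M(Z)}(g_i\mu),\pi_{\M(Z)}(g_{i+K}\mu)\bigr)=d_{\M(W_i)}(\mu,w\mu). \]
I would first record the finiteness fact that drives everything: for fixed markings $x,y$ the set $\{W\subsetneq S\mid d_{\M(W)}(x,y)\ge\kappa\}$ is finite. Indeed, if $d_{\M(W)}(x,y)\ge\kappa$ then Theorem~\ref{T:distance formula} (Distance Formula), applied with $Z=W$, is valid and forces its middle sum to be nonzero, so some $U\subseteq W$ has $\{\!\{d_U(x,y)\}\!\}_\beta\neq 0$, i.e. $U\in\Omega^\circ_\beta(x,y)$; this set is finite by the Distance Formula, and each such $U$ is contained in only finitely many $W\subsetneq S$. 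Consequently, for any fixed radius $K$, the set
\[ \mathcal F=\bigcup_{|w|\le K}\ \{\,W\subsetneq S\mid d_{\M(W)}(\mu,w\mu)\ge\kappa\,\} \]
is finite, being a finite union over the ball of radius $K$ in $G$ of finite sets.

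To make many blocks genuinely land in $\mathcal F$, I would cut the geodesic into consecutive blocks of length $K\approx 2\kappa/c$, chosen so that each block should make progress at least $2\kappa$ on average. Telescoping over the blocks and using the hypothesis gives total progress $\ge cn$, while Proposition~\ref{P:Lipschitz projections} (Lipschitz projections) together with the finiteness of the generating set bounds each block's progress by $LK$, where $L=\delta'\max_s d_{\M(S)}(\mu,s\mu)$. A counting argument then shows that a definite fraction, on the order of $c^2n/(L\kappa)$, of the blocks have progress at least $\kappa$; for each such block, starting at index $i$, the displayed identity gives $d_{\M(W_i)}(\mu,w\mu)\ge\kappa$ with $|w|\le K$, hence $W_i\in\mathcal F$. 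Choosing $R$ large enough that this fraction exceeds $|\mathcal F|$, the pigeonhole principle yields two good blocks with $W_i=W_j$, completing the proof.

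I expect the main obstacle to be the interplay between the (arbitrary, possibly small) constant $c$ and the fixed threshold $\kappa$ at which the finiteness above is available: a naive step-by-step version fails when $c<\kappa$, since individual generator moves need not exceed $\kappa$ and so need not pin their translates into a finite set. Passing to blocks of length $\sim\kappa/c$ is precisely what repairs this, trading small per-step progress for large per-block progress while keeping the relevant elements $w$ inside a fixed finite ball so that $\mathcal F$ remains finite. The remaining points—equivariance of $\pi_{\M(\cdot)}$, the constant bound on block moves, and the verification that $h\cdot Z=Z$ yields a genuine nontrivial reducible—are routine given the Preliminaries.
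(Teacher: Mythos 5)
Your overall architecture --- cut a geodesic for $g$ into blocks of a fixed length, show that a definite fraction of the blocks make progress at least some threshold in $\M(Z)$, use equivariance to convert each good block into a statement about $d_{\M(g_i^{-1}Z)}(\mu,w\mu)$ for $w$ ranging over a fixed finite ball, and then pigeonhole the translated subsurfaces to produce $h$ with $hZ=Z$ --- is exactly the paper's. The block counting and the final step (a coincidence $g_i^{-1}Z=g_j^{-1}Z$ yields a nontrivial element preserving $\partial Z$) are fine. The gap is at the single point where you need finiteness.

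The claim that $\{W\subsetneq S\mid d_{\M(W)}(x,y)\ge\kappa\}$ is finite is false, and the step ``each such $U$ is contained in only finitely many $W\subsetneq S$'' is exactly where it fails: a fixed proper subsurface $U$ is in general contained in infinitely many isotopy classes of subsurfaces. Concretely, take $y=T_\alpha^{N}x$ for a large power of a Dehn twist; then $d_{Y_\alpha}(x,y)$ is huge, and the lower bound in Theorem~\ref{T:distance formula} (Distance Formula) forces $d_{\M(W)}(x,y)\ge\tfrac{1}{\kappa}d_{Y_\alpha}(x,y)\ge\kappa$ for every one of the infinitely many one-holed tori $W$ containing $\alpha$. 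So your set $\mathcal F$ is infinite and the pigeonhole collapses. What \emph{is} finite is $\Omega_\beta(\mu,w\mu)$, the set of subsurfaces that are filled by their big-projection subsurfaces (first part of Corollary~\ref{C:finitely many smallest-big subsurfaces}), and that is the set the paper pigeonholes into. This creates a second problem your argument would then need to address: nothing guarantees that the translate $g_i^{-1}Z$ of your given $Z$ lies in $\Omega_\beta(\mu,w\mu)$, i.e.\ is filled by subsurfaces with big curve-complex projection. The paper's fix is a preliminary reduction absent from your proposal: among all subsurfaces witnessing linear growth of $d_{\M(\cdot)}(\mu,g\mu)$ for arbitrarily long $g\in G$, first pass to one of minimal complexity $\xi_0$. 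Then, for a good block, either its translate lies in $\Omega_\beta(\mu,w\mu)$, or the second part of Corollary~\ref{C:finitely many smallest-big subsurfaces} produces a strictly smaller-complexity subsurface $W\subsetneq g_i^{-1}Z$ with $d_{\M(W)}(\mu,w\mu)$ still a definite linear fraction of $|w|$, contradicting minimality of $\xi_0$. With that reduction inserted, your block-and-pigeonhole scheme goes through essentially as written.
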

The proposition basically says that if there exists arbitrarily large group elements $g \in G$ so that on a proper subsurface $Z$, $d_{\M(Z)}(\mu,g\mu)$ is (at least) linear in $|g|$, then $G$ contains a nontrivial reducible element.  
Before we give the proof, we sketch the idea under the stronger assumption that the projection $d_Z(\mu,g\mu)$ is linear in $|g|$.

Finite generation guarantees that there are only finitely many ``big projections'' among uniformly bounded length group elements.  Considering the geodesic in the Cayley graph from the identity to $g$ as being a concatenation of uniformly bounded length group elements, we see that a definite percentage of these must contribute to the linear growth of the distance in $\C(Z)$.  Each contribution comes from a translate, by an initial segment of the geodesic, of one of the finitely many big projections.  The pigeonhole principle ensures that two distinct initial segments of the geodesic are translating the same subsurface, and hence the composition of one with the inverse of the other fixes that subsurface, and is hence reducible.  

The case of marking graph projections is similar.  If $d_{\M(Z)}(\mu,g\mu)$ is linear in $|g|$, then we pass to a minimal complexity subsurface $W \subseteq Z$ for which $d_{\M(W)}(\mu,g\mu)$ is also linear in $|g|$.  The required finiteness needed to apply the pigeonhole principle follows from the minimal complexity of $W$, appealing to Corollary~\ref{C:finitely many smallest-big subsurfaces}.

\begin{proof}  Suppose that there exists $c' > 0$ such that for all $R >0$, there exists $g \in G$ with $|g| \geq R$ and a proper subsurface $Z \subset S$ with
\[ d_{\M(Z)}(\mu,g\mu) \geq c'|g|.\]
If there is no such $c'$, then the proposition holds vacuously.

Next, consider the smallest integer $\xi_0$ such that for some $c >0$ the following holds.  For all $R > 0$ there exists $g \in G$ with $|g| > R$ and a subsurface $Z$ with $\xi(Z) = \xi_0$ and
\[ d_{\M(Z)}(\mu,g\mu) \geq c |g|.\]
The first paragraph guarantees that $\xi_0$ exists.  Indeed, an upper bound for $\xi_0$ is obtained as the minimum of $\xi(Z)$ such that there exists $g \in G$ with $|g| \geq R$ and $d_{\M(Z)}(\mu,g\mu) \geq c'|g|$.  Fix this minimal $\xi_0$ and the associated $c > 0$, and let 
\[ \mathcal G = \{g \in G \mid d_{\M(Z)}(\mu,g\mu) \geq c|g| \mbox{ for some } Z \mbox{ with } \xi(Z) = \xi_0\}.\]
By assumption, $\mathcal G$ is an infinite set (and in particular, there exists $g \in \mathcal G$ with $|g|$ as large as we like).
Given $g \in \mathcal G$, let $Z(g)$ be a subsurface with $d_{\M(Z(g))}(\mu,g\mu) \geq c|g|$ and $\xi(Z(g)) = \xi_0$.

For any $L > 0$ let
\[ B(L) = \max \{ d_{\M(Z)}(\mu,g\mu) \mid g \in G, \, |g | \leq L, \mbox{ and } Z \subset S \mbox{ is a proper subsurface} \}.\]
Fix any $g \in \mathcal G$, let $Z = Z(g)$, and let $g = g_0g_1 \cdots g_n$ such that $g_0 = id$, $|g_j| = L$ for $1 \leq j  < n$, $|g_n| \leq L$, and
\[ \sum_{j=1}^n |g_j| = |g|.\]
(Note that $n$ depends on $g$.)
For each $j = 0,\ldots,n$, we also write $h_j = g_0 \cdots g_j$, so that $h_j = h_{j-1}g_j$ for all $1 \leq j \leq n$.
Partition the set $\{1,\ldots,n\}$ into two subsets:
\[ J_\ell^L = J_\ell^L(g) = \{j \mid d_{\M(Z)}(h_{j-1}\mu,h_j\mu) \geq \frac{c}{2} |g_j| \},\]
and
\[ J_s^L = J_s^L(g) = \{j \mid d_{\M(Z)}(h_{j-1}\mu,h_j\mu) < \frac{c}{2} |g_j| \}.\]
Then since $|g_j| \leq L$ for all $j$, our assumptions and the triangle inequality implies
\begin{eqnarray*}
c|g| & \leq & d_{\M(Z)}(\mu,g\mu) \\
& \leq & \sum_{j \in J_s^L}d_{\M(Z)}(h_{j-1}\mu,h_j\mu) + \sum_{j \in J_\ell^L} d_{\M(Z)}(h_{j-1}\mu,h_j\mu) \\
& \leq & \frac{c}{2} \sum_{j \in J_s^L} |g_j| + \sum_{j \in J_\ell^L} d_{\M(h_{j-1}^{-1}Z)}(\mu,g_j\mu) \\
& \leq & \frac{c}{2} |g| + B(L)|J_\ell^L|.
\end{eqnarray*}
Therefore, for all $g \in \mathcal G$ and $L > 0$ we have have
\begin{equation} \label{E:min wins} |J_\ell^L(g)| \geq \frac{c}{2B(L)}|g|.\end{equation}

Now let $\beta,\kappa \geq 1$ be as in Corollary~\ref{C:finitely many smallest-big subsurfaces}.\\

\noindent
{\bf Claim.} For all $L$ sufficiently large and $g \in \mathcal G$ with $|g| > L$, if we write $g = g_0g_1 \cdots g_n$ as above and let $j \in J_\ell^L(g) - \{n\}$, then
\[ h_{j-1}^{-1}Z(g) \in \Omega_\beta(\mu,g_j\mu). \]
\begin{proof}[Proof of Claim.]
Observe that if $L \geq \frac{2\kappa\beta}{c}$, then for $j \in J_\ell^L(g) - \{n\}$, $|g_j| = L$, and
\[d_{\M(h_{j-1}^{-1}Z(g))}(\mu,g_j\mu) = d_{\M(Z(g))} (h_{j-1}\mu,h_j\mu) \geq \frac{c}{2} |g_j| = \frac{cL}{2} \geq \kappa \beta. \]
Therefore, either $h_{j-1}^{-1}Z(g) \in \Omega_\beta(\mu,g_j\mu)$ and we are done, or else Corollary~\ref{C:finitely many smallest-big subsurfaces} implies that there is a proper subsurface $W \subset h_{j-1}^{-1}Z(g)$ so that $W \in \Omega_\beta(\mu,g_j\mu)$ and
\[ d_{\M(W)}(\mu,g_j\mu) \geq \frac{1}{\xi(Z(g))\kappa^2} d_{\M(h_{j-1}^{-1}Z(g))}(\mu,g_j\mu) \geq \frac{c}{2\xi(Z(g))\kappa^2}|g_j|.\]
But if there are arbitrarily large $L > 0$, $g \in \mathcal G$ with $|g| > L$, and $j \in J_\ell^L(g) - \{n\}$ for which this inequality holds, then the fact that $\xi(W) < \xi(h_{j-1}^{-1}(Z(g))) = \xi_0$ contradicts our minimality assumption on $\xi_0$ since $\frac{c}{2\xi(Z(g))\kappa^2}$ is constant.  This proves the claim.
\end{proof}

To complete the proof, let $L > 0$ be large enough for the claim to hold.  By (\ref{E:min wins}), we may choose an $R>0$ such that if $g \in \mathcal G$ with $|g|>R$ then $|J_\ell^L(g)|$ is as large as we like.  In particular, we choose $R$ large enough so that
\[ \left| \bigcup_{\{ h \in G \mid |h| = L\}} \Omega_\beta(\mu,h\mu) \right| < |J_\ell^L(g) - \{n\}| \]
By the claim, $h_{j-1}^{-1}Z(g) \in \Omega_\beta(\mu,g_j\mu)$ for all $j \in J_\ell^L(g) - \{n\}$, and since $|g_j| = L$, the Pigeonhole Principle implies that some subsurface from the set
\[ \{h_{j-1}^{-1}Z(g) \mid j \in J_\ell^L(g) - \{n\} \}\]
must be repeated in this listing.  That is,
\[ h_{j-1}^{-1}(Z(g)) = h_{i-1}^{-1}(Z(g))\]
for some $i, j \in J_\ell^L(g) - \{n\}$ with $i > j$.  But $h_{i-1}h_{j-1}^{-1} \in G$ 
sends $Z(g)$ to itself, and is nontrivial since $|h_{i-1}| = (i-1)L \neq (j-1)L = |h_{j-1}|$.
Therefore, $h_{i-1}h_{j-1}^{-1}$ is a nontrivial reducible element of $G$.
\end{proof}

\section{Linear factors}

To prove the Main Theorem, we will show that $d_S(\mu,g\mu)$ is larger than some linear function of $|g|$, and then apply Proposition~\ref{P:ccc is qi in C}.  The proof is by contradiction, and so we will need to know what happens in an undistorted subgroup when $d_S(\mu,g\mu)$ is not linear in $|g|$.  
The main technical proposition we will use is the following.

\begin{proposition}[Linearly summing projections] \label{P:linear marking subsurface}
Given $G < \Mod(S)$, an undistorted subgroup with a fixed finite generating set, and a marking $\mu$, there exist $K,C > 0$ with the following property.  

For all $g \in G$ with $|g| > C$, either $|g| \leq K d_S(\mu,g\mu)$ or else there exist proper subsurfaces $Z_1,\ldots,Z_k \subset S$ and $g_1,\ldots,g_k \in G$ such that
\begin{enumerate}
\item[{\em (i)}] $d_S(\mu,g\mu) \geq k$,
\item[{\em (ii)}] $g = g_1g_2 \cdots g_k$ with $|g| = |g_1| + \cdots + |g_k|$, 
\item[{\em (iii)}] $\displaystyle{|g| \leq K \sum_{j=1}^k d_{\mathcal M(Z_j)}(\mu,g_j \mu)}$, and
\item[{\em (iv)}] $d_{\mathcal M(Z_j)}(\mu,g_j\mu) \leq K|g_j|$, for all $j = 1,\ldots,k$.
\end{enumerate}
\end{proposition}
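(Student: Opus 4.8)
The plan is to assume the failure of the linear bound $|g| \leq K d_S(\mu,g\mu)$ (for a constant $K$ to be fixed later) and build the factorization by following a Cayley graph geodesic from $\mu$ to $g\mu$, tracking the big subsurface projections along the way. The starting point is that $G$ is undistorted, so the orbit map into $\M(S)$ is a quasi-isometric embedding; combined with the Distance Formula (Theorem~\ref{T:distance formula}), this means $|g|$ is comparable to $\sum_{Y \subseteq S} \{\!\{d_Y(\mu,g\mu)\}\!\}_\beta$. If $d_S(\mu,g\mu)$ is not linear in $|g|$ (i.e.\ the $Y=S$ term is small), then the sum over \emph{proper} subsurfaces must carry a definite fraction of $|g|$. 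The goal is to organize these proper subsurfaces into the groups $Z_1,\dots,Z_k$ and to realize each as $d_{\M(Z_j)}(\mu, g_j\mu)$ for an honest group-element factor $g_j$.

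The key technical input is that the big proper subsurfaces in $\Omega^\circ_\beta(\mu,g\mu)$ are linearly ordered by the Subsurface Order (Proposition~\ref{P:subsurface order}), once we restrict attention to those that pairwise overlap; using the Overlapping Factors partition (Proposition~\ref{P:overlapping factors}) we may assume we are working within a single BBF factor, so any two distinct subsurfaces overlap and are hence comparable. I would then invoke the referenced Lemma~\ref{L:path traversed in order}, which says that the geodesic in $\M(S)$ (coming from the Cayley geodesic) is essentially forced by the Behrstock Inequality (Proposition~\ref{P:Behrstock}) to traverse the curve complexes of these subsurfaces one at a time and in the prescribed order. The subsurfaces are grouped into maximal intervals that fill larger proper subsurfaces $Z_1,\dots,Z_k$ of $S$; the Bounded Geodesic Image property (Proposition~\ref{P:BGI}) bounds the number $k$ of such fill-regions by $d_S(\mu,g\mu)$, which gives conclusion (i). Because the traversals happen in order, the Cayley geodesic can be cut at the points where the path moves from one $Z_j$ to the next, producing the factorization $g=g_1\cdots g_k$ with additive lengths (conclusion (ii)); each $g_j$ is responsible for the projection distance accumulated in $Z_j$.

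For conclusion (iii), I would sum the Distance Formula over the factors: since the big proper projections account for a fixed fraction of $|g|$ and each such projection is counted inside exactly one $d_{\M(Z_j)}(\mu,g_j\mu)$ (using coarse transitivity of projections for nested subsurfaces, as in the subsurface version of the Distance Formula), the total $\sum_j d_{\M(Z_j)}(\mu,g_j\mu)$ is bounded below by a constant times $|g|$. Conclusion (iv) is the reverse Lipschitz-type bound: $d_{\M(Z_j)}(\mu,g_j\mu) \le \delta' d_{\M(S)}(\mu,g_j\mu)$ by Lipschitz projection (Proposition~\ref{P:Lipschitz projections}), and $d_{\M(S)}(\mu,g_j\mu) \le \kappa' |g_j|$ by the quasi-isometric orbit map (Proposition~\ref{P:MCGMG}) and finite generation. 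Absorbing all multiplicative constants into a single $K$ gives (iv), and choosing $C$ large enough to make the asymptotic statements valid completes the argument.

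The main obstacle I anticipate is making precise the claim that the factorization is ``honest''—that cutting the Cayley geodesic at the transition points between consecutive fill-regions $Z_j$ actually isolates the projection distance to $Z_j$ in $g_j$ rather than leaking it across factors. This is exactly where the ordering (Proposition~\ref{P:subsurface order}) and the Behrstock Inequality (Proposition~\ref{P:Behrstock}) must be combined carefully: I need each $g_j$ to see essentially the full distance $d_{\M(Z_j)}(\mu,g_j\mu)$ while contributing negligibly to the other $Z_i$. The delicate bookkeeping is controlling the $O(B)$-sized ``boundary leakage'' between adjacent intervals so that it does not destroy the linear lower bound in (iii); this is presumably what Lemma~\ref{L:path traversed in order} is designed to handle, and the bulk of the real work lies in invoking it with the right constants.
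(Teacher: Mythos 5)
Your outline follows the paper's proof essentially step for step: isolate a single BBF factor carrying a definite fraction of $|g|$ via the Distance Formula, order its big subsurfaces by the Subsurface Order, use the Behrstock Inequality to show the Cayley geodesic traverses them in order (this is exactly Lemma~\ref{L:path traversed in order}), group them into maximal intervals filling proper subsurfaces with $k \le d_S(\mu,g\mu)$ via Bounded Geodesic Image, cut the geodesic at the transition prefixes to get the factorization, and recover (iii) from the Distance Formula and (iv) from Lipschitz projection. The ``boundary leakage'' issue you flag is precisely what the paper's $5b$-error bookkeeping in Lemma~\ref{L:path traversed in order} and inequality~(\ref{E:pre index-change sum}) handles, so your plan is correct and matches the paper's argument.
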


According to Proposition~\ref{P:Lipschitz projections} (Lipschitz projection), (iv) is automatic as soon as $K$ is sufficiently large, and so we focus on (i) - (iii).  The proof requires two constructions and several lemmas.  Fix an undistorted subgroup $G < \Mod(S)$, a finite generating set, and a marking $\mu$ for the remainder of this section.

\begin{lemma} \label{L:linear factor}
For $\beta > 0$ sufficiently large, there exists $K',C' > 0$ such that if $g \in G$ with $|g| > C'$ then
\[ |g| \leq K' \sum_{Y \in \bY} \{\!\{ d_Y(\mu,g \mu) \}\!\}_\beta \]
for some BBF factor $\bY$.
\end{lemma}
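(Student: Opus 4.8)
The plan is to combine two inputs: undistortedness, which upgrades the orbit map into the marking graph to a quasi-isometric embedding, and the Distance Formula (Theorem~\ref{T:distance formula}), which writes $d_{\M(S)}(\mu,g\mu)$ as a coarse sum of subsurface projections. Since the BBF factors partition the set of all subsurfaces into \emph{finitely many} classes, a single pigeonhole step then isolates one factor carrying a definite proportion of the total sum, which is all the lemma asks for.

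First I would record the undistortedness input. As $G$ is undistorted in $\Mod(S)$ and the orbit map $\Mod(S)\to\M(S)$ is a quasi-isometric embedding by Proposition~\ref{P:MCGMG}, the composite orbit map $g\mapsto g\mu$ from $G$ to $\M(S)$ is a quasi-isometric embedding. Hence there are constants $A\geq 1$, $B\geq 0$ with
\[ d_{\M(S)}(\mu,g\mu)\geq \frac{1}{A}|g|-B. \]
Fixing $\beta$ large enough for Theorem~\ref{T:distance formula} and letting $\kappa=\kappa(\beta)$ be its constant, I would choose $C'$ large enough that $|g|>C'$ forces both $d_{\M(S)}(\mu,g\mu)\geq \frac{1}{2A}|g|$ and $d_{\M(S)}(\mu,g\mu)\geq \kappa$.

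Next I would apply the Distance Formula. Because $d_{\M(S)}(\mu,g\mu)\geq\kappa$, the lower bound of Theorem~\ref{T:distance formula} (with $Z=S$) is valid, giving
\[ \sum_{Y\subseteq S}\{\!\{ d_Y(\mu,g\mu)\}\!\}_\beta \geq \frac{1}{\kappa}\,d_{\M(S)}(\mu,g\mu)\geq \frac{1}{2A\kappa}|g|. \]
Let $N$ be the number of BBF factors, which is finite and depends only on $S$ by \cite{BBF}. Since the factors partition $\{Y\subseteq S\}$, the total sum splits as a sum over the $N$ factors, and the pigeonhole principle produces a factor $\bY$ with
\[ \sum_{Y\in\bY}\{\!\{ d_Y(\mu,g\mu)\}\!\}_\beta \geq \frac{1}{N}\sum_{Y\subseteq S}\{\!\{ d_Y(\mu,g\mu)\}\!\}_\beta \geq \frac{1}{2AN\kappa}|g|. \]
Setting $K'=2AN\kappa$ yields the claimed inequality for all $|g|>C'$.

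The argument is essentially bookkeeping, so I do not expect a deep obstacle; the two points requiring care are (a) threading the conditional validity of the Distance Formula, which is exactly why $C'$ must be taken large enough to force $d_{\M(S)}(\mu,g\mu)\geq\kappa$, and (b) ensuring the linear lower bound from the quasi-isometric embedding absorbs the additive constant $B$. I would also remark that the factor $\bY$ produced may be the singleton $\{S\}$, which is a legitimate BBF factor by Proposition~\ref{P:overlapping factors}; the lemma as stated permits this, though any downstream use requiring proper subsurfaces would need to treat that case separately.
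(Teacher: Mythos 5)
Your proof is correct and follows essentially the same route as the paper, whose entire argument is the one-line observation that the BBF factors form a finite partition so the claim is immediate from the Distance Formula (with undistortedness as the standing hypothesis of the section); you have simply filled in the quantifier bookkeeping, including the correct handling of the conditional validity of Theorem~\ref{T:distance formula}. Your closing remark that the selected factor could be $\{S\}$ is also consistent with how the paper uses the lemma, since the proof of Proposition~\ref{P:linear marking subsurface} disposes of that case separately via the alternative $|g| \leq K' d_S(\mu,g\mu)$.
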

\begin{proof}  The factors form a {\em finite} partition of the set of subsurfaces, so is is immediate from Theorem~\ref{T:distance formula} (Distance Formula).
\end{proof}

By Proposition~\ref{P:Lipschitz projections} (Lipschitz projection), there exists $b > 0$ so that for any subsurface $Z$ we have
\[ d_{\mathcal M(Z)}(\mu,x \mu) \leq b \mbox{ and } d_Z(\mu,x\mu) \leq b\]
for each of our finitely many generators $x$ of $G$.  We assume (as we may) that $b \geq B$ from Propositions~\ref{P:Behrstock} (Behrstock Inequality) and \ref{P:subsurface order} (Subsurface Order).  Fix any $\beta > 5b + M + 3 \delta > 2B$, where $M$ is the constant from Proposition~\ref{P:BGI} (Bounded Geodesic Image) and $\delta$ is the constant from Proposition~\ref{P:bounded diameter projection} (Bounded Diameter Projections), and set $\beta_0 = \beta + 5b$.  

For any $g \in G$ and factor $\bY \neq \{S\}$, let $Y_1,\ldots,Y_n$ be the set of all subsurfaces in $\bY$ such that $d_{Y_i}(\mu,g\mu) \geq \beta_0$.  Further assume they are ordered as in Proposition~\ref{P:subsurface order} (Subsurface Order) with $Y_i < Y_j$ for all $i < j$ (see also Proposition~\ref{P:overlapping factors} (Overlapping factors)).  Consider a geodesic in (the Cayley graph of) $G$ from $id \in G$ to $g \in G$.  Consecutive group elements along the geodesic differ by one of the generators, and applying these elements to $\mu$ gives a discrete path of markings in $\mathcal M(S)$, which in turn project to discrete paths in each curve graph $\C(Y_i)$, starting at $\pi_{Y_i}(\mu)$ and ending at $\pi_{Y_i}(g\mu)$.  

Roughly speaking, the next lemma says the paths in $\C(Y_i)$ respect the ordering $Y_1 < Y_2 < \cdots  < Y_n$, meaning that the projection to $\C(Y_i)$ cannot make progress from $\pi_{Y_i}(\mu)$ toward $\pi_{Y_i}(g \mu)$ until the projection to $\C(Y_{i-1})$ is sufficiently close to $\pi_{Y_{i-1}}(g \mu)$.  This is a straightforward consequence of Propositions~\ref{P:Behrstock} (Behrstock Inequality) and \ref{P:subsurface order} (Subsurface Order), and is reflected in the quasi-tree behavior proved in \cite{BBF}.

To make this precise, we first define a {\em prefix} of $g \in G$ to be an element $g' \in G$ so that
$g = g'g''$ and $|g| = |g'| + |g''|$.  If $g'$ is a prefix of $g$, we write $g' \preceq g$ (and $g' \prec g$ if $g' \neq g$).  Note that $\preceq$ is a partial order on the prefixes of $g$, and any maximally ordered chain of prefixes are the vertices of a geodesic in $G$.  Fix such a geodesic, and for each $1 \leq j < n$, let $g_j'$ be the longest prefix of that geodesic such that
\[ d_{Y_j}(g_j'\mu,g \mu) \geq 2b. \]
To avoid special cases, we also let $g_0' = id$ and $g_n'= g$.

\begin{lemma} \label{L:path traversed in order} For all $0 \leq i < j \leq n$, we have $g_i' \prec g_j'$ and if $i  < \ell \leq j$, then
\[ |d_{Y_\ell}(\mu,g\mu) - d_{Y_\ell}(g_i'\mu,g_j'\mu)| < 5b.\]
\end{lemma}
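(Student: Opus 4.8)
The plan is to follow the fixed geodesic from $id$ to $g$ and track, for each $1 \le j < n$, the \emph{completion prefix} $g_j'$ at which the projection to $\C(Y_j)$ first comes within $2b$ of its terminal value $\pi_{Y_j}(g\mu)$; the goal is to show these prefixes occur in order, $g_1' \prec g_2' \prec \cdots$, and that the entire $\C(Y_\ell)$--journey is confined to the subpath from $g_i'$ to $g_j'$ whenever $i < \ell \le j$. Two elementary facts drive the argument. First, each generator moves any subsurface projection by at most $b$, so $h \mapsto \pi_{Y_j}(h\mu)$ is a discrete path making steps of size at most $b$; together with the maximality in the definition of $g_j'$ and the fact that $d_{Y_j}(g\mu,g\mu)=0$, this pins down, for interior indices $1 \le j \le n-1$, the two--sided bound
\[ 2b \le d_{Y_j}(g_j'\mu, g\mu) < 3b. \]
Second, since $\bY \neq \{S\}$, Proposition~\ref{P:overlapping factors} (Overlapping Factors) gives $Y_i \pitchfork Y_j$ for all $i \neq j$, so the $Y_i$ are \emph{totally} ordered by Proposition~\ref{P:subsurface order} (Subsurface Order) with $\mu_1 = \mu$, $\mu_2 = g\mu$, and I may freely invoke conditions (2)--(5) of that proposition for every pair.

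The technical core is a single computation, which I isolate: \emph{for $p < q$ with $g_p'$ interior, $d_{Y_q}(\mu, g_p'\mu) < 2b$.} Indeed, the two--sided bound and condition (5) ($d_{Y_p}(g\mu, \partial Y_q) < B \le b$) give $d_{Y_p}(g_p'\mu, \partial Y_q) > b \ge B$; feeding this into Proposition~\ref{P:Behrstock} (Behrstock Inequality), applied to the marking $g_p'\mu$ (which fills $S$ and so cuts both $Y_p$ and $Y_q$), forces $d_{Y_q}(g_p'\mu, \partial Y_p) < B$. Combined with condition (4) ($d_{Y_q}(\mu, \partial Y_p) < B$), the triangle inequality yields $d_{Y_q}(\mu, g_p'\mu) < 2B \le 2b$. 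This one estimate supplies both the ordering and half of the distance bound.

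For the ordering, fix $i < j$. The core estimate gives $d_{Y_j}(\mu, g_i'\mu) < 2b$, hence
\[ d_{Y_j}(g_i'\mu, g\mu) \ge d_{Y_j}(\mu, g\mu) - 2b \ge \beta_0 - 2b = \beta + 3b. \]
As $\beta + 3b > 2b$, the prefix $g_i'$ satisfies the defining inequality of $g_j'$, so $g_i' \preceq g_j'$; as $\beta + 3b > 3b$ exceeds the upper bound $d_{Y_j}(g_j'\mu,g\mu) < 3b$, we cannot have $g_i' = g_j'$, giving $g_i' \prec g_j'$ (the boundary indices $0$ and $n$ are immediate since $\beta_0 > 3b$ and $d_{Y_i}(g\mu,g\mu)=0$). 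For the distance estimate with $i < \ell \le j$, the core estimate (with $q = \ell$) gives $d_{Y_\ell}(\mu, g_i'\mu) < 2b$. At the far end I use maximality rather than Behrstock: the ordering just proved gives $g_\ell' \preceq g_j'$, and since $g_\ell'$ is the \emph{longest} prefix with $d_{Y_\ell}(\cdot\,\mu, g\mu) \ge 2b$, every later prefix --- in particular $g_j'$ when $\ell < j$ --- has $d_{Y_\ell}(g_j'\mu, g\mu) < 2b$, while $\ell = j$ gives $< 3b$ by the two--sided bound; either way $d_{Y_\ell}(g_j'\mu, g\mu) < 3b$. Applying the triangle inequality once through $\mu$ and once through $g\mu$ now sandwiches $d_{Y_\ell}(g_i'\mu, g_j'\mu)$ within $5b$ of $d_{Y_\ell}(\mu, g\mu)$, which is the claim.

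I expect the core computation to be the only real obstacle: it is where the Behrstock inequality must be made to ``switch sides'' at precisely the right prefix, and where the four conditions (2)--(5) of the Subsurface Order must be matched to the correct roles of $\mu$, $g\mu$, $\partial Y_p$, $\partial Y_q$. One point to handle with care is that Proposition~\ref{P:Behrstock} is stated for a curve, whereas I apply it to the full marking $g_p'\mu$; the marking version holds with the same constant up to the bounded--diameter error of Proposition~\ref{P:bounded diameter projection}, which is exactly why $b \ge B$ and the generous choice $\beta > 5b + M + 3\delta$ leave ample slack. Everything downstream of the core estimate --- the ordering and the two triangle inequalities producing the constant $5b$ --- is routine bookkeeping.
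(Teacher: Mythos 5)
Your proof is correct and follows essentially the same route as the paper's: the same two-sided bound $2b \le d_{Y_j}(g_j'\mu,g\mu) < 3b$ obtained from maximality plus the Lipschitz step of size $b$, the same core Behrstock/Subsurface-Order computation giving $d_{Y_q}(\mu,g_p'\mu) < 2b$ for $p<q$ (the paper's inequality (\ref{E:i close to j entrance})), and the same pair of triangle inequalities producing the constant $5b$. The only cosmetic difference is that you derive strictness of $g_i' \prec g_j'$ by comparing $d_{Y_j}(g_i'\mu,g\mu) \ge \beta+3b$ against the upper bound $3b$ at $g_j'$, whereas the paper checks that the successor prefix $g_i'x$ still satisfies the defining inequality; both are fine, and your remark about applying Behrstock to markings rather than curves is the same implicit convention the paper uses.
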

\begin{proof}  For any $1 \leq i < n$, if $x$ is a generator so that $g_i'x$ is also a prefix of the geodesic for $g$, then by maximality of the length of $g_i'$, we have $d_{Y_i}(g_i'x\mu,g\mu) < 2b$.  On the other hand, 
\[ d_{Y_i}(g_i'\mu,g_i'x\mu) = d_{(g_i')^{-1}Y_i}(\mu,x\mu) \leq b. \]
Thus,
\begin{equation} \label{E:i close to i exit} d_{Y_i}(g_i'\mu,g\mu) \leq d_{Y_i}(g_i'\mu,g_i'x\mu) + d_{Y_i}(g_i'x\mu,g\mu) < b + 2b = 3b.
\end{equation}
Inequality (\ref{E:i close to i exit}) also clearly holds for $i = n$ since $g_n' = g$.

Now suppose $1 \leq i < j \leq n$.  Since $Y_i < Y_j$, we have $d_{Y_i}(g\mu,\partial Y_j) < B \leq b$ and $d_{Y_j}(\mu,\partial Y_i) < B \leq b$.
Therefore
\[ d_{Y_i}(g_i'\mu,\partial Y_j) \geq d_{Y_i}(g_i'\mu,g\mu) - d_{Y_i}(g\mu,\partial Y_j) > 2b - b = b \geq B,\]
and by Proposition~\ref{P:Behrstock} (Behrstock Inequality), we have $d_{Y_j}(g_i'\mu,\partial Y_i) < B \leq b$.  
Hence
\begin{equation} \label{E:i close to j entrance}
d_{Y_j}(\mu,g_i'\mu) \leq d_{Y_j}(\mu,\partial Y_i) + d_{Y_j}(\partial Y_i,g_i'\mu) < 2b,
\end{equation}
and so
\[ d_{Y_j}(g_i'\mu,g\mu) \geq d_{Y_j}(\mu,g\mu) - d_{Y_j}(\mu,g_i'\mu)  \geq \beta_0 - 2b > \beta \geq 5b.\]
If $x$ is a generator so $g_i'x$ is a prefix of the geodesic for $g$, then
\[ d_{Y_j}(g_i'x\mu,g\mu) \geq d_{Y_j}(g_i'\mu,g\mu) - d_{Y_j}(g_i'x\mu,g_i'\mu)  \geq 5b-b \geq 4b.\]
It follows that $g_i' \prec g_j'$.
Furthermore, by the maximality of the length of $g_i'$, $d_{Y_i}(g_j'\mu,g\mu) < 2b$.  
From this and (\ref{E:i close to i exit}), we see that if $1 \leq i \leq j \leq n$, then
\[ d_{Y_i}(g_j'\mu,g\mu) \leq 3b.\]

Now assuming $1 \leq i < \ell \leq j \leq n$, this inequality and (\ref{E:i close to j entrance}) implies
\begin{eqnarray*} |d_{Y_\ell}(\mu,g\mu) - d_{Y_\ell}(g_i'\mu,g_j'\mu)| &  \leq & d_{Y_\ell}(\mu,g_i'\mu) + d_{Y_\ell}(g_j' \mu,g \mu) <  2b + 3b = 5b.
 \end{eqnarray*}
This also clearly holds for $i = 0$.
\end{proof}

We continue to assume $Y_1,\ldots,Y_n$ are the subsurfaces in a BBF factor $\bY \neq \{S\}$ with $d_{Y_i}(\mu,g\mu) \geq \beta_0 = \beta + 5b$.
For all $1 \leq i \leq j \leq n$, let $Y_{ij}$ be the subsurface filled by $Y_i,\dots, Y_j$. 
We then choose $0 = i_0 <  i_1 < \cdots< i_k = n$ such that $Z_j' = Y_{(i_{j-1}+1)(i_{j})}$ is a proper subsurface of $S$, but $Y_{(i_{j-1}+1)(i_j+1)} = S$.


\begin{lemma} \label{L:containers bound main distance}
If $Z_1',\dots, Z_k'$ are as above, then $d_S(\mu,g\mu) \ge k$.
\end{lemma}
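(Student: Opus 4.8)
The plan is to bound $d_S(\mu,g\mu)$ from below by locating, along a single geodesic in $\C(S)$, a strictly increasing sequence of $k$ vertices, one harvested from each block. Choose $\gamma_0 \in \pi_S(\mu)$ and $\gamma_L \in \pi_S(g\mu)$ and a geodesic $\gamma_0,\dots,\gamma_L$ in $\C(S)$; since $d_S(\mu,g\mu)$ is the diameter of $\pi_S(\mu)\cup\pi_S(g\mu)$, it suffices to show $L = d_{\C(S)}(\gamma_0,\gamma_L)\ge k$. We arrange $\gamma_0$ to \emph{cut} $Y_1$, which is possible because $d_{Y_1}(\mu,g\mu)\ge\beta_0>0$ forces some base curve of $\mu$ to cut $Y_1$. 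For each $\ell$ we have $d_{Y_\ell}(\mu,g\mu)\ge\beta_0>M$, so Proposition~\ref{P:BGI} (Bounded Geodesic Image) guarantees a geodesic vertex disjoint from $Y_\ell$ (i.e.\ with $\pi_{Y_\ell}=\emptyset$); let $a_\ell\in\{0,\dots,L\}$ be the least such index. Writing $s_j=i_{j-1}+1$ for the index of the first subsurface in block $j$, the goal becomes $0<a_{s_1}<a_{s_2}<\cdots<a_{s_k}\le L$.

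\textbf{Monotonicity of the exit indices.} First I would show $a_i\le a_\ell$ for $i<\ell$; this is the $\C(S)$-geodesic analogue of the ordering already used in Lemma~\ref{L:path traversed in order}. If $\gamma_0$ does not cut $Y_i$ then $a_i=0$ and there is nothing to prove, so assume it does. For any index $b$ with $\gamma_b$ disjoint from $Y_\ell$, disjointness makes $\gamma_b$ disjoint from $\partial Y_\ell$, so clause $(5)$ of Proposition~\ref{P:subsurface order} (Subsurface Order), giving $d_{Y_i}(g\mu,\partial Y_\ell)<B$, together with Proposition~\ref{P:bounded diameter projection} places $\pi_{Y_i}(\gamma_b)$ boundedly close to $\pi_{Y_i}(g\mu)$, while $\pi_{Y_i}(\gamma_0)$ lies within $\delta$ of $\pi_{Y_i}(\mu)$. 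As $d_{Y_i}(\mu,g\mu)\ge\beta_0$ is large, $d_{Y_i}(\gamma_0,\gamma_b)>M$, so applying Proposition~\ref{P:BGI} to the sub-geodesic $\gamma_0,\dots,\gamma_b$ produces a vertex disjoint from $Y_i$ at some index $\le b$. Taking $b=a_\ell$ gives $a_i\le a_\ell$.

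\textbf{Strictness across block boundaries.} The heart of the matter is that $a_{s_j}<a_{s_{j+1}}$ for each internal boundary. Suppose not, so $a_{s_j}=a_{s_{j+1}}=m$ with $\gamma_m$ disjoint from both $Y_{s_j}$ and $Y_{s_{j+1}}=Y_{i_j+1}$. By construction $Y_{s_j},\dots,Y_{i_j},Y_{i_j+1}$ fill $S$, so $\gamma_m$ cuts some $Y_\ell$ with $s_j<\ell\le i_j$, whence $Y_{s_j}<Y_\ell<Y_{s_{j+1}}$. Disjointness of $\gamma_m$ from $Y_{s_j}$ with clause $(4)$ ($d_{Y_\ell}(\mu,\partial Y_{s_j})<B$), and from $Y_{s_{j+1}}$ with clause $(5)$ ($d_{Y_\ell}(g\mu,\partial Y_{s_{j+1}})<B$), force $\pi_{Y_\ell}(\gamma_m)$ to lie boundedly close to \emph{both} $\pi_{Y_\ell}(\mu)$ and $\pi_{Y_\ell}(g\mu)$, so $d_{Y_\ell}(\mu,g\mu)$ is bounded by a fixed multiple of $B+\delta$ — contradicting $d_{Y_\ell}(\mu,g\mu)\ge\beta_0$ once $\beta_0$ exceeds those errors, which it does since $\beta>5b+M+3\delta$. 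Combining this with the previous paragraph and $a_{s_1}\ge1$ (as $\gamma_0$ cuts $Y_1=Y_{s_1}$) yields $k$ distinct indices in $\{1,\dots,L\}$, so $L\ge k$.

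\textbf{Main obstacle.} The step demanding the most care is the strictness argument: it is exactly where the ``fills $S$'' half of the block construction enters, and one must verify that $\beta_0$ dominates the cumulative error coming from Propositions~\ref{P:subsurface order} and~\ref{P:bounded diameter projection}. A secondary nuisance is securing $a_{s_1}\ge1$: if $Y_1$ is an annulus whose core is a base curve of $\mu$ then no $\gamma_0\in\pi_S(\mu)$ cuts $Y_1$, and one must instead extract the extra unit of distance at the $g\mu$ end by choosing $\gamma_L$ to cut $Y_n$, using that $\mu$ and $g\mu$ are full markings so that at least one endpoint behaves generically.
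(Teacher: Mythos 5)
Your proof is correct and follows essentially the same route as the paper's: fix a geodesic in $\C(S)$, use Proposition~\ref{P:BGI} to extract for each $Y_i$ a vertex disjoint from it, prove these vertices occur monotonically via Proposition~\ref{P:subsurface order}, and use the ``fills $S$'' property of each block $Y_{i_{j-1}+1},\dots,Y_{i_j+1}$ to force strict progress at block boundaries. The only differences are cosmetic (you take the \emph{first} disjoint vertex where the paper takes the \emph{last}, and you phrase the strictness contradiction through an intermediate $Y_\ell$ rather than through a single vertex disjoint from a filling family), and the endpoint issue you flag at the end is glossed over in the paper's proof as well.
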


\begin{proof} Fix a geodesic $\gamma$ in $\C(S)$ between $\alpha \in \mu$ and $\alpha' \in g\mu$ so that $\alpha \pitchfork Y_1$ and $\alpha' \pitchfork Y_n$.  For all $1 \leq i \leq n$, one deduces from Proposition~\ref{P:subsurface order} (Subsurface Order) that $\alpha,\alpha' \pitchfork Y_i$, and from Proposition~\ref{P:bounded diameter projection} (Bounded Diameter Projection), that $d_{Y_i}(\alpha,\alpha') \geq \beta_0 -2 \delta$.  By Proposition~\ref{P:BGI} (Bounded Geodesic Image), there is at least one curve in $\gamma$ that is disjoint from $Y_i$, and we let $\alpha_i$ be the largest, as ordered by the appearance in $\gamma$ from $\alpha$ to $\alpha'$.

We claim that if $i<j$, then $\alpha_i \le \alpha_j$.  To see this, suppose $\alpha_j < \alpha_i$.  Then by Proposition~\ref{P:BGI}~(Bounded Geodesic Image), $d_{Y_j}(\alpha_i,\alpha') \leq M$ while Proposition~\ref{P:subsurface order}~(Subsurface Order), guarantees that $d_{Y_j}(\alpha,\partial Y_i) < B$, and hence
\begin{eqnarray*} d_{Y_j}(\alpha_i,\partial Y_i) & \geq & d_{Y_j}(\alpha,\alpha')  - d_{Y_j}(\alpha,\partial Y_i) - d_{Y_j}(\alpha_i,\alpha') \\
& > & \beta_0 - 2\delta - M - B \geq \delta \end{eqnarray*}
But then $\partial Y_i$ and $\alpha_i$ must intersect (otherwise their projections would have distance at most $\delta$ by Proposition~\ref{P:bounded diameter projection} (Bounded Diameter Projection)) a contradiction.

It follows that if $i < j$ and $\alpha_i = \alpha_j$ then for all $i< \ell<j$ we have $\alpha_i = \alpha_\ell = \alpha_j$. Since the surfaces $Y_{i_{j-1} + 1}, \dots, Y_{{i_j} +1}$ fill $S$ this implies that for all $j = 1,\ldots,k-1$, $\alpha_{i_{j-1}+1} \neq \alpha_{i_j+1}$ so there must be at least $k-1$ distinct $\alpha_i$.  Since $\alpha_i \neq \alpha,\alpha'$ for all $i$, $d_S(\mu,g\mu) \geq d_S(\alpha,\alpha') \geq k$. 
\end{proof}

\begin{proof}[Proof of Proposition~\ref{P:linear marking subsurface}.]   Continue to assume that $d_Z(\mu,x \mu),d_{\mathcal M(Z)}(\mu,x\mu) \leq b$ for each generator $x$ and suppose $\beta > 5b + M + 3 \delta > 2B$ is sufficiently large for Theorem~\ref{T:distance formula} (Distance Formula) to hold and set $\beta_0 = \beta + 5b$.   Let $\kappa > 0$ be the constant from that theorem applied to $\beta$.  Let $C',K'$ be the constants from Lemma~\ref{L:linear factor}, and let $C = C'$ and $K = 2 \kappa K'$. 

For any $|g| > C = C'$, if $|g| \leq K' d_S(\mu,g\mu)$, then $|g| \leq K d_S(\mu,g\mu)$.  Since this is one of the possible conclusions of the proposition, for the remainder of the proof we only consider elements $g \in G$ with $|g| > K' d_S(\mu,g\mu)$ and $|g| > C'$.  For such an element, let $Y_1,\ldots,Y_n$ be the subsurfaces in the factor $\bY$ provided by Lemma~\ref{L:linear factor} with $d_{Y_i}(\mu,g\mu) \geq \beta_0 = \beta + 5b$ so that
\begin{equation} \label{E:basic factor bound}
|g| \leq K' \sum_{i=1}^n d_{Y_i}(\mu,g\mu).
\end{equation}
Let $id=g_0' \prec g_1' \prec \ldots \prec g_n' = g$ be the prefixes of a geodesic for $g$ so that Lemma~\ref{L:path traversed in order} holds.  We also let $0 = i_0 < i_1 < i_2 < \ldots < i_k = n$ and $Z_1',\ldots,Z_k'$ be the subsurfaces constructed from $Y_1,\ldots,Y_n$ as above, so that $Y_{i_{j-1} + 1},\ldots,Y_{i_j}\subset Z_j'$, for all $1 \leq j \leq k$.

Since $\beta > 5b$, and $d_{Y_i}(\mu,g\mu) \geq \beta_0 = \beta + 5b$, we have $d_{Y_i}(\mu,g\mu) - 5b \geq \tfrac12 d_{Y_i}(\mu,g\mu)$, for all $i$.  Combining this with Lemma~\ref{L:path traversed in order} we conclude
\begin{equation} \label{E:pre index-change sum} \sum_{i = i_{j-1}+1}^{i_j} \!\!\! d_{Y_i}(g_{i_{j-1}}'\mu,g_{i_j}'\mu) \geq \!\! \sum_{i = i_{j-1}+1}^{i_j} \!\!\! \left( d_{Y_i}(\mu,g\mu)-5b\right)  \geq \tfrac12 \!\!\! \sum_{i = i_{j-1}+1}^{i_j} \!\!\! d_{Y_i}(\mu,g\mu).
\end{equation}

Set $g_0 = id = g_0'$, and for any $1 \leq j \leq k$, define $g_j = (g_{i_{j-1}}')^{-1}g_{i_j}'$ and $Z_j = (g_0 \cdots g_{j-1})^{-1}(Z_j')$.  By Lemma~\ref{L:containers bound main distance}, (i) holds.
Furthermore, by induction, $g_{i_j}' = g_1 \cdots g_j$, $g= g_1g_2\cdots g_k$, and $|g| = |g_1| + \cdots + |g_k|$.  Therefore, part (ii) follows.

Observe that for $i_{j-1}+1 \leq i \leq i_j$, we have $(g_0 \cdots g_{j-1})^{-1}(Y_i) \subset Z_j$ and
\[ d_{(g_0 \cdots g_{j-1})^{-1}Y_i}(\mu,g_j\mu) = d_{Y_i}(g_0\cdots g_{j-1}\mu,g_0 \cdots g_j\mu) = d_{Y_i}(g_{i_{j-1}}'\mu,g_{i_j}'\mu) \geq \beta.\]
Combining this, (\ref{E:pre index-change sum}), and Theorem~\ref{T:distance formula} (Distance Formula), we have
\begin{eqnarray*} 
\kappa d_{\mathcal M(Z_j)}(\mu,g_j\mu) & \geq & \sum_{Y \subseteq Z_j} \{\!\{ d_Y(\mu,g_j\mu) \}\!\}_\beta \\
& \geq & \sum_{i = i_{j-1}+1}^{i_j} d_{(g_0 \cdots g_{j-1})^{-1}Y_i}(\mu,g_j\mu) \\
& \geq & \sum_{i = i_{j-1}+1}^{i_j} d_{Y_i}(g_1\cdots g_{j-1}\mu,g_1 \cdots g_j\mu) \\
& \geq & \tfrac12 \sum_{i = i_{j-1}+1}^{i_j} d_{Y_i}(\mu,g\mu).
\end{eqnarray*}
Recall that $K = 2\kappa K'$.
By summing this inequality over $j$, (\ref{E:basic factor bound}) implies
\[ K \sum_{j=1}^k d_{\mathcal M(Z_j)}(\mu,g_j\mu) 
= 2 \kappa K' \sum_{j=1}^k d_{\mathcal M(Z_j)}(\mu,g_j\mu) 
\geq K' \sum_{i=1}^n d_{Y_i}(\mu,g\mu) \geq |g|. \]
This proves (iii), and completes the proof of the proposition.
\end{proof}

\section{Proof of the Main Theorem}

We are now ready for the proof of the
\begin{theorem:main} A subgroup $G< \Mod(S)$ is convex cocompact if and only if it is finitely generated, undistorted, and purely pseudo-Anosov.
\end{theorem:main}
\begin{proof} If $G$ is convex cocompact, then, by Proposition~\ref{P:ccc is qi in C}, $G$ is finitely generated and any orbit map $G \to \C(S)$ is a quasi-isometric embedding.  Combining this with Proposition~\ref{P:Lipschitz projections} (Lipschitz projection), we see that the orbit map to $\M(S)$ is a quasi-isometric embedding.  By Proposition~\ref{P:MCGMG}, $G$ is undistorted.

Now suppose that $G < \Mod(S)$ is a finitely generated, undistorted, purely pseudo-Anosov subgroup and let $K,C$ be as in Proposition~\ref{P:linear marking subsurface} (Linearly summing projections).   Without loss of generality, we may assume $G$ is torsion free.
Choose $c < \frac{1}{2K}$ and let $R > 1$ be as in Proposition~\ref{P:linear projection reducible} (Linear projection reducibility). 

If, for all $g \in G - \{id\}$, $\frac{d_S(\mu,g\mu)}{|g|}$ is uniformly bounded below, then $G$ is convex cocompact, and we are done.  Therefore, we assume that this quotient can be made arbitrarily small, and derive a contradiction.  Specifically, we assume that there exists $g \in G$ with $|g| \geq \max\{R,C\}$ such that
\[ \frac{d_S(\mu,g\mu)}{|g|} < \frac{1}{2K^2R} \leq \frac1K.\]

Observe that $|g| > K d_S(\mu,g\mu)$, so that the second conclusion of Proposition~\ref{P:linear marking subsurface} (Linearly summing projections) holds.   Let $Z_1,\ldots,Z_k$ and $g_1,\ldots,g_k$ be as in that proposition and set
\[ J_s = \{ j \mid d_{\mathcal M(Z_j)}(\mu,g_j\mu) < c |g_j| \}, \]
and
\[ J_\ell = \{j \mid d_{\mathcal M(Z_j)}(\mu,g_j\mu) \geq c|g_j|\}.\]
If for any $j \in J_\ell$, $|g_j| \geq R$, then by
Proposition~\ref{P:linear projection reducible} (Linear projection reducibility), $G$ contains a
reducible element, a contradiction.  Therefore, we may assume that $|g_j| < R$ for all $j \in J_\ell$.  By Proposition~\ref{P:linear marking subsurface} (Linearly summing projections), we have
\begin{eqnarray*} |g| & \leq & K \sum_{j=1}^k d_{\mathcal M(Z_j)}(\mu,g_j\mu) \\
& = & K \left( \sum_{j \in J_s} d_{\mathcal M(Z_j)}(\mu,g_j\mu) + \sum_{j \in J_\ell} d_{\mathcal M(Z_j)}(\mu,g_j\mu) \right) \\
& < & K \left( \sum_{j \in J_s} c |g_j| + \sum_{j \in J_\ell} K R \right) \\
& \leq & K ( c|g| + k K R) \\
& \leq & K ( c|g| + d_S(\mu,g\mu) K R)
\end{eqnarray*}
Dividing both sides by $|g|$, we find
\[ 1 \leq cK + \frac{d_S(\mu,g\mu) K^2R}{|g|} < \frac{K}{2K} +  \frac{K^2R}{2K^2R} = \frac12 + \frac12 = 1.\]
This is a contradiction, which completes the proof.
\end{proof}

  \bibliographystyle{alpha}
  \bibliography{main}

\end{document}